\newcommand{\R}{{\mathbb R}}
\newcommand{\cA}{{\mathcal A}}
\newcommand{\cH}{{\mathcal H}}
\newcommand{\cP}{{\mathcal P}}
\newcommand{\e}{\varepsilon}
\newcommand{\al}{\alpha}
\newcommand{\be}{\beta}
\newcommand{\la}{\lambda}
\newcommand{\loc}{\operatorname{loc}}
\newcommand{\D}{\nabla}
\DeclareRobustCommand{\rchi}{{\mathpalette\irchi\relax}}
\newcommand{\irchi}[2]{\raisebox{\depth}{$#1\chi$}}
\def\Xint#1{\mathchoice
  {\XXint\displaystyle\textstyle{#1}}%
  {\XXint\textstyle\scriptstyle{#1}}%
  {\XXint\scriptstyle\scriptscriptstyle{#1}}%
  {\XXint\scriptscriptstyle\scriptscriptstyle{#1}}%
  \!\int}
\def\XXint#1#2#3{{\setbox0=\hbox{$#1{#2#3}{\int}$}
    \vcenter{\hbox{$#2#3$}}\kern-.5\wd0}}
\def\dashint{\Xint-}
\newcommand{\qq}[1]{``#1''}
\newcommand{\mean}[1]{\langle{#1}\rangle}
\newtheorem{theorem}{Theorem}
\theoremstyle{plain}
\newtheorem{lemma}{Lemma}
\newtheorem{remark}{Remark}
\newtheorem{proposition}{Proposition}
\newtheorem{conjecture}{Conjecture}
\numberwithin{equation}{section}
\begin{document}

\author{Seongmin Jeon}
\address[Seongmin Jeon]
{Department of Mathematics Education \newline 
\indent Hanyang University \newline 
\indent 222 Wangsimni-ro, Seongdong-gu, Seoul 04763, Republic of Korea} 
\email[Seongmin Jeon]{seongminjeon@hanyang.ac.kr}

\author{Henrik Shahgholian}
\address[Henrik Shahgholian]
{Department of Mathematics \newline 
\indent KTH Royal Institute of Technology \newline 
\indent 100 44 Stockholm, Sweden} 
\email[Henrik Shahgholian]{henriksh@kth.se}

\title[Fine Structure of Free boundaries]{Remarks on the fine structure of the free boundary \\ (the no-sign  obstacle problem) }

\date{\today}
\subjclass[2020]{35R35} 
\thanks{SJ was supported by the Academy of Finland grant 347550 and and the research fund of Hanyang University(HY-202400000003278). HS is supported by Swedish Research Council.}

\begin{abstract}

We present a number of results inspired by the approach developed in a recent work by A. Figalli and J. Serra on the fine structure of the obstacle problem, which turns out to be partially effective in addressing the no-sign obstacle problem. However, we also highlight one or two central questions that remain open and appear to require different techniques not presently available to us.

\end{abstract}

\maketitle 

\tableofcontents

\section{Introduction}

\subsection{Background}

The structure of free boundaries (both smooth and singular points) have been subject of intense study in the last few decades, and have given rise to several tools that have been developed as well as several results on various free boundary models. The study of the regular part of the free boundary, originating in the work of L.A. Caffarelli \cite{Caff-Acta}, has found many directions of flourishments, and the approach has been diversified to several directions. However, the study of singular set of the free boundary, again originated in the work of L.A. Caffarelli \cite{Caf98}--The more accurate citation would be works of Caffarelli-Rivi\`{e}re \cite{CR-1, CR-2} in two space dimensions where methods developed there were specific for the plane. 
M. Sakai gave a complete description of the free boundary, regular as well as singular parts, 
in the plane \cite{Sakai1, Sakai2}.
We also refer to work of G. Weiss \cite{Weiss99} and R. Monneau \cite{Monneau} for partial results.


A. Figalli and J. Serra \cite{FigSer19} undertook a detailed investigation of this problem, demonstrating that in the case $n = 2$, singular points are locally situated on a $C^2$ curve. For higher dimensions $n \geq 3$, they established that an analogous result holds, where singular points lie within $C^{1,1}$ manifolds (or within a countable collection of $C^2$ manifolds), except for certain "anomalous" points of higher codimension. Furthermore, they proved that the higher-dimensional stratum is necessarily contained in a $C^{1,\alpha}$ manifold, thereby generalizing to all dimensions a result previously obtained by G. Weiss \cite{Weiss99}. The result in \cite{FigSer19} was generalized to the fully nonlinear obstacle problem in \cite{SavYu23} and \cite{SavYu22}.

Our interest stemmed from investigating whether the Figalli-Serra approach could be applied to the no-sign obstacle problem \eqref{eq:no-sign-obst} and identifying the challenges that might arise.
Notably, Caffarelli’s approach \cite{Caf98}, with some modifications, was readily adaptable to the no-sign obstacle problem \cite{CafSha04}.

It soon became clear to us that almost every step of Figalli and Serra’s method translated effortlessly to our setting—except for one "minor" complication!
This difficulty, which we elaborate on later in this note, prevents us from establishing the fine structure of the top stratum $\Sigma_{n-1}$, as achieved in the classical obstacle problem \cite{FigSer19}; see Remark~\ref{rem:n-1} and compare Theorem~\ref{thm:main} (a) with Conjecture~\ref{conj:n-1}. This challenge is the primary motivation for writing this paper. In our unsuccessful attempt to resolve this issue, we encountered  other intriguing questions, which we will discuss in due course.

\subsection{The no-sign obstacle problem}
Consider the no-sign obstacle problem \begin{align}
    \label{eq:no-sign-obst}
    \begin{split}
        &\Delta u=\rchi_{\Omega(u)}\quad\text{in }B_1,\\
        &\Omega(u):=B_1\setminus\Lambda(u),
    \end{split}
\end{align}
where $\Lambda(u)=\{u=|\D u|=0\}$ and  $B_1$ is the unit ball in $n$-dimensional Euclidean space $\R^n$, ($n\geq 2$). 
This problem appears in many applications and has a non-variational nature, and its existence cannot be achieved by minimizing a (corresponding)  functional. 
This problem arises as an inverse source problem in potential theory and geophysics  \cite[Notes of Ch. 1]{PetShaUra12},  in scattering theory  \cite{SaSh}, and in many other problems. 
For results on the existence of solutions, we refer the reader to the book by V. Isakov \cite{Isakov}. In a more general setting, such as the so-called superconductivity problem—where the equation is modified to $\Delta u = \chi_{\{|\nabla u| > 0\}}$—we refer to the work of  L.A. Caffarelli and J.  Salazar \cite{CaSa}.

The aim of this paper is to study the fine structure of the free boundary for the no-sign obstacle problem \eqref{eq:no-sign-obst}. It can be viewed as an extension of the recent paper \cite{FigSer19} on the classical obstacle problem. Our central result can be found in Theorem~\ref{thm:main}.

We only consider singular points $x^0$ where the blow-up $p_{*,x^0}$ is a nonnegative polynomial. The nonnegativity assumption on $p_{*,x^0}$ seems imperative (and almost inevitable) for the Almgren's frequency formula (specificlly, the monotonicity of $r\mapsto\Phi(r,u(x^0+\cdot)-p_{*,x^0})$). For this reason, we introduce $\Sigma_{m}^+=\{x^0\in \Sigma_m:p_{*,x^0}\ge0\}$ for $1\le m\le n-2$, and study the structure of $\Sigma_m^+$ instead of $\Sigma_m$.


\subsection{Notation}\label{sec:notation}
For a point $x\in\R^n$ and $r>0$, by $B_r(x)$ we mean the ball of radius $r$ centered at $x$, i.e.,
$$
B_r(x):=\{y\in\R^n\,:\, |y-x|<r\}.
$$
We set \begin{align*}
&\cP:=\{\text{$p(x)$: homogeneous quadratic polynomial with $\Delta p=1$}\},\\
&\cP^+:=\{p\in\cP\,:\, p\ge0\}.
\end{align*}
For a solution $u$ of \eqref{eq:no-sign-obst}, we say that a free boundary point $x^0\in\Gamma(u):=\partial\Omega(u)\cap B_1$ is \emph{singular} if the rescalings $$
u_{x^0,r}(x):=\frac{u(x^0+rx)}{r^2}
$$
converges over a subsequence to $p_{*,x^0}\in \cP$ as $r\to0$. Such polynomial $p_{*,x^0}$ is called a \emph{blowup} of $u$ at $x^0$. The uniqueness of the blowup $p_{*,x^0}$ at each singular point $x^0$ can be found in \cite{CheFenLi22}. We denote by $\Sigma=\Sigma_u$ the set of singular points of $u$. We also write $L_{x^0}=\{x\in\R^n\,:\,p_{*,x^0}=|\D p_{*,x^0}|=0\}$.

For given function $v$, point $x^0$, radius $r>0$ and constant $\la>0$, we let 
\begin{align*}
    &\Phi(r,v,x^0):=\frac{r\int_{B_r(x^0)}|\D v|^2}{\int_{\partial B_r(x^0)}v^2}: \text{Almgren's frequency functional},\\
    &W_\la(r,v,x^0):=\frac1{r^{n-2+2\la}}\int_{B_r(x^0)}|\D v|^2-\frac{\la}{r^{n-1+2\la}}\int_{\partial B_r(x^0)}v^2:\text{Weiss-type energy functional},\\
    &H_\la(r,v,x^0):=\frac1{r^{n-1+2\la}}\int_{\partial B_r(x^0)}v^2:\text{Monneau's functional}.
\end{align*}

When $x^0\in \Sigma_u$ and $p_{*,x^0}\in \cP^+$ with $u\not\equiv p_{*,x^0}$, we define
$$
\la_{*,x^0}:=\Phi(0+,u(x^0+\cdot)-p_{*,x^0})=\lim_{r\to0+}\Phi(r,u(x^0+\cdot)-p_{*,x^0}).
$$
For the notational simplicity, if $x^0=0$ then we simply write $p_*=p_{*,x^0}$, $L=L_{x^0}$, $\Phi(r,v)=\Phi(r,v,x^0)$, $\la_{*}=\la_{*,0}$, etc. We refer Lemma~\ref{lem:mon-formula} for the existence of the limit of $\Phi$ as $r\to0+$.

When $0\in \Sigma$, we denote
\begin{align*}
    w(x):=u(x)-p_*(x),\quad w_r(x):=w(rx),\quad \tilde w_r(x):=\frac{w_r(x)}{\|w_r\|_{L^2(\partial B_1)}}.
\end{align*}

Next, we classify the singular set $\Sigma$ with the dimension of $L_{x^0}$
\begin{align*}
\Sigma=\cup_{m=0}^{n-1}\Sigma_{m}, \quad\text{where }\Sigma_m:=\{x^0\in \Sigma\,:\,\dim(L_{x^0})=m\}.
\end{align*}
For $m=n-1$, we denote for $\la>2$
$$
S_{n-1,\la}:=\{x^0\in \Sigma_{n-1}\,:\,\Phi(0+,u(x^0+\cdot)-p_{*,x^0})\ge\la\},
$$
and write $\Sigma_{n-1}$ as a disjoint union of its \qq{generic} part $\Sigma_{n-1,g}$ and \qq{anomalous} part $\Sigma_{n-1,a}$ defined as
\begin{align*}
    &\Sigma_{n-1,g}:=S_{n-1,5/2}=\{x^0\in \Sigma_{n-1}\,:\,\Phi(0+,u(x^0+\cdot)-p_{*,x^0})\ge5/2\},\\
    &\Sigma_{n-1,a}:=\Sigma_{n-1}\setminus S_{n-1,5/2}=\{x^0\in \Sigma_{n-1}\,:\,\Phi(0+,u(x^0+\cdot)-p_{*,x^0})<5/2\}.
\end{align*}
When $0\le m\le n-2$, we introduce the following notations 
\begin{align*}
    &\Sigma^+:=\{x^0\in \Sigma\,:\, p_{*,x^0}\ge0\},\\
    &\Sigma_m^+:=\{x^0\in \Sigma_m\,:\, p_{*,x^0}\ge0\},\\
    & S_\la^+:=\{x^0\in \Sigma^+\,:\, \Phi(0+,u(x^0+\cdot)-p_{*,x^0})\ge\la\},\\
    &S_{m,\la}^+:=\{x^0\in \Sigma_{m}^+\,:\, \Phi(0+,u(x^0+\cdot)-p_{*,x^0})\ge\la\},
\end{align*}
and write $\Sigma_m^+=\Sigma_{m,g}^+\cup \Sigma_{m,a}^+$, where
\begin{align*}
    &\Sigma^+_{m,g}:=S^+_{m,3}=\{x^0\in \Sigma^+_{m}\,:\,\Phi(0+,u(x^0+\cdot)-p_{*,x^0})\ge3\},\\
    &\Sigma^+_{m,a}:=\Sigma^+_m\setminus S_{m,3}^+=\{x^0\in \Sigma^+_{m}\,:\,\Phi(0+,u(x^0+\cdot)-p_{*,x^0})<3\}.
\end{align*}
We remark that for every $x^0\in \Sigma$, we have $p_{*,x^0}\ge0$ if and only if $\{u<0\}$ has zero-density near $x^0$, i.e., $\lim_{\rho\to0}\frac{|B_\rho(x^0)\cap \{u<0\}|}{|B_\rho|}=0$. 

We also notice that
$$
\Sigma^+=\Sigma_{n-1}\cup\left(\cup_{m=1}^{n-2}\Sigma_m^+\right).
$$

For a set $S\subset\R^n$, the maximal radius of $S$, $MR(S)$, is the radius of the largest ball contained in $S$. Given constants $M>0$ and $0<\e\le1$ and a modulus of continuity $\omega$, we say that a function $u$ belongs to $ P(M,\omega,\e,x^0)$ if
\begin{align*}
    &\, (1)\, \text{$u$ is a solution of \eqref{eq:no-sign-obst} in $B_1$},\\
    &\, (2)\,x^0\in \Sigma^+,\\
    &\,(3)\, \|D^2u\|_{L^\infty(B_1)}\le M,\\
    &\,(4)\, \frac{\sup_{y\in \overline{B_\e(x^0)}\cap \Gamma}MR(B_r(y)\cap \{u<0\})}r\le \omega(r),\quad 0\le r<1-\e.
\end{align*}

Throughout this paper, we assume $u\not\equiv p_{*,x^0}$ for any $x^0\in \Sigma$, as otherwise there is nothing to study $\Sigma_u=\Sigma_{p_{*,x^0}}$.

\subsection{Main results and structure of the paper}
The main result of this paper is as follows:

\begin{theorem}\label{thm:main}
Let $u$ be a solution of \eqref{eq:no-sign-obst} in $B_1$.
\begin{enumerate}[(a)]
  \item 
  For $n\ge 2$, $\Sigma_{n-1,g}$ is locally contained in a $C^{1,1/2}$ $(n-1)$-dimensional manifold, and $\Sigma_{n-1,a}$ is a relatively open subset of $\Sigma_{n-1}$.
    \item For $n\ge3$ and $1\le m\le n-2$, $\Sigma_{m,g}^+$ is locally contained in a $C^{1,1}$ $m$-dimensional manifold, and $\Sigma_{m,a}^+$ is a relatively open subset of $\Sigma_m^+$ with $\dim_{\cH}(\Sigma_{m,a}^+)\le m-1$.
    \item 
    Let $n\ge3$ and $1\le m\le n-2$. We additionally assume that for every $x^0\in \Sigma_m^+$, $u\in P(M,\omega,\e,x^0)$ for some $0<\e<1$ and a modulus of continuity $\omega(r)$ satisfying $\omega(r)\le C[\log(1/r)]^{-\frac1{2(n-1)}}$, $0<r<1$, for some $C>0$. Then $\Sigma_m^+$ is locally covered by a $C^{1,\log^{\frac1{2(n-1)}}}$ $m$-dimensional manifold.
\end{enumerate}
\end{theorem}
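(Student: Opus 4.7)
The plan is to mirror the Figalli--Serra strategy \cite{FigSer19} applied to the difference $w = u - p_{*,x^0}$, exploiting the Almgren frequency functional $\Phi(r,w,x^0)$. The monotonicity of $\Phi$ (Lemma~\ref{lem:mon-formula}) requires $p_{*,x^0}\ge 0$, which is automatic for $\Sigma_{n-1}$ (since any $p\in\cP$ vanishing on a hyperplane is $\ge 0$) and is built into the definition of $\Sigma_m^+$ for $1\le m\le n-2$. Hence $\la_{*,x^0}$ is well defined on all the strata considered, and the normalized rescalings $\tilde{w}_r$ subsequentially converge to a $\la_{*,x^0}$-homogeneous harmonic function $q_{x^0}$ encoding the leading behavior of $u$ beyond $p_{*,x^0}$. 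The frequency gaps that drive the whole analysis are $\la_{*,x^0}\in\{2\}\cup[3,\infty)$ for $x^0\in\Sigma_m^+$ with $1\le m\le n-2$, and $\la_{*,x^0}\in\{2\}\cup[5/2,\infty)$ for $x^0\in\Sigma_{n-1}$; the half-integer $5/2$ already accounts for the weaker $C^{1,1/2}$ regularity in part (a), as opposed to $C^{1,1}$ in part (b).

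For parts (a) and (b), I would first establish that $\Sigma_{n-1,a}$ is relatively open in $\Sigma_{n-1}$ and $\Sigma_{m,a}^+$ is relatively open in $\Sigma_m^+$, via upper-semicontinuity of $x^0\mapsto\la_{*,x^0}$ combined with the gaps above. On the generic strata, the Monneau-type monotonicity of $r\mapsto H_{\la_{*,x^0}}(r, w - q_{x^0}, x^0)$ and uniqueness of the blowup yield a Taylor-type expansion
\begin{equation*}
    u(x) = p_{*,x^0}(x-x^0) + q_{x^0}(x-x^0) + o(|x-x^0|^{\la_{*,x^0}})
\end{equation*}
uniform in $x^0$ on compact subsets, with the two-jet $(p_{*,x^0}, q_{x^0})$ continuous in $x^0$. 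This supplies the compatibility conditions of Whitney's extension theorem on the generic stratum, giving a $C^{1,1}$ (resp.\ $C^{1,1/2}$) $m$-dimensional manifold containing $\Sigma_{m,g}^+$ (resp.\ $\Sigma_{n-1,g}$). The bound $\dim_{\cH}(\Sigma_{m,a}^+)\le m-1$ for $1\le m\le n-2$ would follow from a Federer-type dimension reduction: an anomalous blowup is a quadratic harmonic polynomial vanishing on $L_{x^0}$ (of dimension $m$), and its quadratic nature forces invariance along at least one extra direction, reducing the effective dimension of the anomalous stratum by at least one.

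For part (c), the hypothesis $u\in P(M,\omega,\e,x^0)$ with $\omega(r)\le C[\log(1/r)]^{-1/(2(n-1))}$ converts the qualitative continuity of $x^0\mapsto q_{x^0}$ into a quantitative logarithmic modulus. The Almgren and Monneau monotonicities carry an error term arising from the negativity set $\{u<0\}$, controlled via the maximal radius condition built into $P(M,\omega,\e,x^0)$. Iterating the Monneau estimate along dyadic scales yields a logarithmic modulus on $x^0\mapsto q_{x^0}$ of the same order as $\omega$, and a quantitative Whitney extension then produces the $C^{1,\log^{1/(2(n-1))}}$ covering manifold of $\Sigma_m^+$---now including the anomalous points.

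\textbf{Main obstacle.} As emphasized in the introduction (see Remark~\ref{rem:n-1} and Conjecture~\ref{conj:n-1}), $\Sigma_{n-1}$ is where the classical method falls short: the half-integer homogeneity $5/2$ forces $C^{1,1/2}$ rather than $C^{1,1}$, and we lack a clean Federer-type reduction that would complement the qualitative openness of $\Sigma_{n-1,a}$ with a Hausdorff dimension bound. Within parts (b) and (c), the delicate technical point is quantifying how the set $\{u<0\}$ enters the Monneau monotonicity: in the classical (signed) obstacle problem this set is empty and iteration is transparent, whereas here one absorbs an error at every dyadic scale, and the modulus $\omega$ in part (c) is calibrated precisely so that this error is summable up to the rate claimed.
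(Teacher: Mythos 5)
Your proposal follows the same general route as the paper---monotonicity of the Almgren, Weiss, and Monneau functionals applied to $w=u-p_{*,x^0}$, frequency classification of blowups, Whitney extension on the generic strata, and a Federer-type dimension reduction for the anomalous ones---but there are two points that need correction.

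First, the asserted frequency gap $\la_{*,x^0}\in\{2\}\cup[5/2,\infty)$ for $x^0\in\Sigma_{n-1}$ is \emph{not} established in the paper; it is precisely the open question highlighted in Remark~\ref{rem:n-1} and Conjecture~\ref{conj:n-1}. The threshold $5/2$ in the definition of $\Sigma_{n-1,g}=S_{n-1,5/2}$ is an arbitrary cutoff used to split $\Sigma_{n-1}$ into a part where Lemma~\ref{lem:S-class} applies with $\la=5/2$ (giving $C^{1,1/2}$) and a leftover part; it is not the left endpoint of a proven gap. In the classical obstacle problem the gap $\la_*\ge 2+\al_0$ on $\Sigma_{n-1}$ is derived by identifying the Almgren blowup $q$ with a solution of the thin obstacle problem, which requires $q\ge 0$ on $L$; this fails in the no-sign setting, and without it the anomalous top stratum can carry any frequency in $(2,5/2)$. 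Fortunately this misconception does not break your argument for part~(a): the $C^{1,1/2}$ regularity on $\Sigma_{n-1,g}$ comes from the Whitney-extension lemma with $\la=5/2$, and relative openness of $\Sigma_{n-1,a}$ follows from upper semicontinuity of $x^0\mapsto\Phi(0+,u(x^0+\cdot)-p_{*,x^0})$ alone---no gap is needed there. But you should not invoke a gap on $\Sigma_{n-1}$ as a proven fact.

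Second, your treatment of part~(c) does not identify the actual technical obstruction, which is Caffarelli's asymptotic convexity estimate, i.e.\ the lower bound $D_{ii}u\ge -C[\log(1/r)]^{-\be}$ near singular points (equation~(3.24) in Figalli--Serra). In the classical obstacle problem this estimate is a classical consequence of $u\ge 0$; in the no-sign setting it fails without extra input, and this is exactly what the hypothesis $u\in P(M,\omega,\e,x^0)$ is designed to restore. Your description locates the error in the Almgren/Monneau monotonicities, but Lemma~\ref{lem:mon-formula} shows these hold cleanly as soon as $p_{*,x^0}\ge 0$; the negativity set enters not through the monotonicity formulas but through the convexity bound. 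The paper recovers it by a dyadic iteration in the spirit of Karp--Shahgholian: setting $M_k=\sup_y(-\inf_{B_{2^{-k}}(y)}D_{ii}u)$, applying their Lemma~4.6 together with the Harnack inequality to obtain $-M_{k+1}+M_k\ge CM_k^{2n-1}-CM_k^{2n-2}\omega(2^{-k})$, and then invoking the recurrence inequality Lemma~\ref{lem:M_k-est} (proved in the Appendix) to extract $M_k\le C_0 k^{-\be}$ with $\be=\frac1{2(n-1)}$. Without this ingredient, one cannot run the Monneau-to-Whitney argument uniformly over all of $\Sigma_m^+$ (anomalous points included), which is what gives the logarithmic modulus in the covering manifold.
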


The rest of the paper is organized as follows. In Section~\ref{subsec:open-prob}, we present open problems on the no-sign obstacle problem and the superconductivity problem. In Section~\ref{sec:struc-sing-set}, we establish our main result, Theorem~\ref{thm:main}. In Section~\ref{sec:diss}, we discuss the structure of the top stratum $\Sigma_{n-1}$. Finally, in Appendix~\ref{appen:recur-ineq} we provide the proof of a recurrence inequality necessary to prove the main theorem.

\subsection{Open problems}\label{subsec:open-prob}

Here, we present several open problems that we believe may be of interest to the free boundary community and would require new ideas and approaches to address.

Obtaining a complete theory, parallel to Figalli-Serra, would naturally be a great achievement. This would require first of all to remove the small density assumption for the negative set of solutions, for lower stratum. For the highest stratum, this is obvious since blow up polynomial can not take  negative values, if their coincidence set has Hausdorff dimension $n-1$. 

An interesting problem encountered when trying the Figalli-Serra method is that the reduced problem, after linearization, is a thin obstacle problem, but without a sign condition  on the thin space; see Section~\ref{sec:diss}, in particular, \eqref{eq:Alm-blowup-nosign}. Currently, there are no theories developed for this problem.

One may also consider the so-called superconductivity problem, which is defined through the equation
\begin{align}\label{eq:super-con}
\Delta u = \chi_{\{ |\nabla u | >0\}} .
\end{align}
At singular points, even if the blow-up is a non-negative polynomial, new challenges arise. For example, for a solution $u$ of \eqref{eq:super-con} and a nonnegative blow-up $p_*$, $w:=u-p_*$ does not necessarily satisfy
$$
w\Delta w=0.
$$
This corresponds to Remark 2.2 in \cite{FigSer19} and plays a significant role in the proof of monotonicity formulas, Lemma~\ref{lem:mon-formula}.



\section{Structure of the singular set with zero-density of the negativity set}\label{sec:struc-sing-set}

In this paper we focus on understanding the structure of $\Sigma_{n-1}$ and $\Sigma^+_m$ for $1\le m\le n-2$, as the standard argument shows that $\Sigma_0$ is an isolated set. Indeed, we assume to the contrary that there is a sequence of free boundary points $x^k$ converging to a point $x^0\in \Sigma_0$. Then, we have for $r_k:=|x^k-x^0|$
$$
u_{r_k,x^k}(x)=\frac{u(x^k,r_kx)}{r_k^2}=\frac{u\left(x^0+r_k\left(x+\frac{x^k-x^0}{r_k}\right)\right)}{r_k^2}=u_{r_k,x^0}\left(x+\frac{x^k-x^0}{r_k}\right).
$$
Over a subsequence, $u_{r_k,x^k}$ converges in $C^1_{\loc}(\R^n)$ to a global solution $u_0$ and $\frac{x^k-x^0}{r_k}$ converges to a point $z^0\in\partial B_{1}$. Thus,
$$
u_0(x)=p_*(x+z^0),\quad x\in\R^n.
$$
Then, from $u_{r_k,x^k}(x)=|\D u_{r_k,x^k}(x)|=0$, we find $p_*(z^0)=|\D p_*(z^0)|=0$. This is a contradiction since $p_*$ is the $n$-dimensional homogeneous polynomial.

\begin{lemma}[Monotonicity formulas]\label{lem:mon-formula}
Let $u$ be a solution of \eqref{eq:no-sign-obst} and $p\in \cP^+$ with $0\in \Sigma$ and $u\not\equiv p$. Then
\begin{enumerate}
    \item $\Phi(r,u-p)$ is nondecreasing in $r\in (0,1)$, and $\Phi(0+,u-p)\ge2$.
    \item For any $\la\ge0$, $W_\la(r,u-p)$ is nondecreasing in $r\in (0,1)$.
    \item For $0\le\la\le\Phi(0+,u-p)$, $H_\la(r,u-p)$ is nondecreasing in $r\in (0,1)$. On the other hand, when $\la>\Phi(0+,u-p)$, we have $H_\lambda(r,u-p)\to \infty$ as $r\to 0$.
\end{enumerate}
\end{lemma}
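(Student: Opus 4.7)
The backbone of the proof is a pair of pointwise identities. Since $\Delta u = 1 - \chi_\Lambda$ and $\Delta p = 1$, we have $\Delta w = -\chi_\Lambda$, where $w := u - p$ and $\Lambda = \{u = |\D u| = 0\}$. On $\Lambda$, $u = 0$ and $\D u = 0$, so $w = -p$ and $\D w = -\D p$; combined with Euler's relation $x \cdot \D p = 2p$ (since $p$ is $2$-homogeneous) and $p \ge 0$, this yields
\[
w\,\Delta w \;=\; p\,\chi_\Lambda \;\ge\; 0, \qquad (x \cdot \D w)\,\Delta w \;=\; 2\,p\,\chi_\Lambda \;=\; 2\,w\,\Delta w.
\]
These play the role of Remark~2.2 in \cite{FigSer19}; the assumption $p \ge 0$ enters only here, and the second identity fails for the superconductivity equation \eqref{eq:super-con}, which is precisely the obstruction flagged in Section~\ref{subsec:open-prob}.

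For (2), I would differentiate $W_\la(r, w)$ directly, using Green's identity $\int_{\partial B_r} w w_\nu = D(r) + F(r)$ with $D(r) := \int_{B_r}|\D w|^2$ and $F(r) := \int_{B_r} w\Delta w \ge 0$, the Pohozaev-type identity $r D'(r) = (n-2)D + 2r\int_{\partial B_r} w_\nu^2 - 2 \int_{B_r}(x\cdot\D w)\Delta w$, and $r H'(r) = (n-1)H + 2r(D+F)$. Substituting $(x\cdot\D w)\Delta w = 2w\Delta w$ collapses the $\Delta w$-contributions, producing
\[
W_\la'(r) \;=\; \frac{2}{r^{n-2+2\la}} \int_{\partial B_r} \Big(w_\nu - \frac{\la}{r} w\Big)^2 \;+\; \frac{2(\la - 2)}{r^{n-1+2\la}} F(r),
\]
which is manifestly $\ge 0$ for $\la \ge 2$ since $F \ge 0$. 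For $\la \in [0, 2)$ the second term is non-positive and one would recover monotonicity by a scaling/blow-up reduction to the critical value $\la = 2$, exploiting that $w \in C^{1,1}$.

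For (1), the same ingredients yield an explicit expression for $(\log \Phi)'(r)$ in which the Cauchy-Schwarz bound $E^2 \le H \int_{\partial B_r} w_\nu^2$ (where $E = \int_{\partial B_r} w w_\nu$) handles the leading term, and the $F$-contributions are non-negative provided $\Phi(r) \ge 2$; this last condition is bootstrapped from the unconditional monotonicity of $W_2$ (cf.\ (2) at $\la = 2$) via the identity $W_2(r) = r^{-(n+3)} H(r)(\Phi(r)-2)$. The inequality $\Phi(0+) \ge 2$ itself I would derive from a blow-up argument: since $u, p \in C^{1,1}_{\loc}$ with $\D u(0) = \D p(0) = 0$, we have $w = O(|x|^2)$ at the origin, so any non-trivial $L^2(\partial B_1)$-normalized blow-up limit $\tilde w_0 = \lim \tilde w_{r_k}$ is $\Phi(0+)$-homogeneous of degree at least $2$.

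Finally, (3) follows from the short identity
\[
\frac{d}{dr} \log H_\la(r) \;=\; \frac{2(\Phi(r) - \la)}{r} \;+\; \frac{2F(r)}{H(r)},
\]
obtained by combining the formula for $H'$ with Green's identity. When $\la \le \Phi(0+) \le \Phi(r)$ (by the monotonicity in (1)), both summands are non-negative, giving monotonicity of $H_\la$. When $\la > \Phi(0+)$, right-continuity of $\Phi$ provides $\Phi(r) \le \la - \delta$ for some $\delta > 0$ on a right-neighborhood of zero, and integrating $(\log H_\la)'(r) \le -2\delta/r + 2F/H$ (the last term being bounded since $|w|, |\D w| = O(r)$) forces $H_\la(r) \to \infty$ as $r \to 0^+$. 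The main obstacle I anticipate is establishing $\Phi(0+) \ge 2$ cleanly in the no-sign setting, where $w$ may change sign so the classical blow-up analysis must be adapted, together with completing the $\la \in [0, 2)$ case of (2); both issues are secondary to the key algebraic identities above, whose breakdown for \eqref{eq:super-con} is exactly what makes the superconductivity case genuinely different.
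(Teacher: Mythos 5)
Your approach coincides exactly with the paper's: the paper's entire proof is the single observation that $w\Delta w = p\,\rchi_{\{u=0\}} \ge 0$ (a.e.\ this is the same as your $p\,\rchi_{\Lambda}$), after which one ``repeats the argument in [FigSer19].'' You reconstruct what that argument is. Your two key identities are exactly the no-sign analogue of Remark~2.2 in \cite{FigSer19}, and your derived formulas
\[
W_\la'(r)=\frac{2}{r^{n-2+2\la}}\int_{\partial B_r}\Bigl(w_\nu-\tfrac{\la}{r}w\Bigr)^2+\frac{2(\la-2)}{r^{n-1+2\la}}F(r),
\qquad
(\log H_\la)'(r)=\frac{2(\Phi(r)-\la)}{r}+\frac{2F(r)}{H(r)},
\]
with $F(r)=\int_{B_r}w\Delta w\ge0$, are correct and are precisely the formulas the argument rests on. Your remark that $p\ge0$ enters only through these identities, and that the second identity is the one that breaks for $\Delta u=\rchi_{\{|\D u|>0\}}$, is also exactly the point the paper makes in Section~\ref{subsec:open-prob}.

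Two cautionary notes, both about issues you yourself flag. First, your identity for $W_\la'$ gives monotonicity \emph{only} for $\la\ge 2$; for $\la\in[0,2)$ the $F$-term has the wrong sign, and ``scaling/blow-up reduction to $\la=2$'' is not a proof — a blow-up argument can control limits, not the sign of a derivative at every scale. This is not a defect specific to your write-up (you are inheriting the statement from \cite{FigSer19}, and only $\la\ge2$ is ever used in the sequel, e.g.\ in Lemma~\ref{lem:S-class}), but as written your proof of item~(2) is incomplete and the hand-waving would need to be replaced either by the actual argument in \cite{FigSer19} or by restricting to $\la\ge2$. Second, $\Phi(0+)\ge 2$ is not an immediate consequence of $w=O(|x|^2)$; one needs to combine the $C^{1,1}$ bound $H(r)\le Cr^{n+3}$ with the Monneau monotonicity (or the Almgren monotonicity already established) to rule out $\Phi(0+)<2$ — which is how \cite{FigSer19} does it — so the ``blow-up limit is $\ge2$-homogeneous'' sentence should be expanded to that effect. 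Neither gap is conceptual, and neither is specific to the no-sign setting; the genuinely new ingredient, namely $w\Delta w=p\,\rchi_\Lambda\ge0$ using $p\in\cP^+$, you have correctly.
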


We remark that $p\in \cP^+$ in Lemma~\ref{lem:mon-formula} is not necessarily a blowup of $u$ at 0.

\begin{proof}
Using that $u$ is a solution of \eqref{eq:no-sign-obst} and $p\in \mathcal{P}^+$, one can easily show that $w\Delta w=p\rchi_{\{u=0\}}\ge0$. With this property of $w$ at hand, we can repeat the argument in \cite{FigSer19} to conclude the lemma.
\end{proof}

To proceed, we state two lemmas that serve as analogues of \cite[Lemma~2.11]{FigSer19} and \cite[Lemma~2.12]{FigSer19}, whose proofs follow in the same way.

\begin{lemma}\label{lem:blowup-ineq}
For $\tilde w_r$ as in Proposition~\ref{prop:blow-up-struc}, we assume that $\tilde w_{r_{k_l}}\to q$ weakly in $W^{1,2}(B_1)$ for some sequence $r_{k_l}\searrow0$. Then 
\begin{align}
    \label{eq:auxi-Monneau}
    \int_{\partial B_1}q(p_*-p)\ge0\quad\text{for all }p\in \cP^+.
\end{align}
\end{lemma}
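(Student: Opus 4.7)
The plan is to apply the Monneau-type monotonicity formula to the comparison function $u-p$ (with $p\in\cP^+$ not necessarily equal to the blow-up $p_*$) and then extract the desired inequality by subtracting the limit at $r=0$ and dividing by the correct normalizing factor. By Lemma~\ref{lem:mon-formula}(1) we have $\Phi(0+,u-p)\geq 2$, so Lemma~\ref{lem:mon-formula}(3) with $\la=2$ gives that $r\mapsto H_2(r,u-p)$ is nondecreasing on $(0,1)$.

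Rescaling with $y=rx$ and using the $2$-homogeneity of $p$ and $p_*$,
\begin{equation*}
H_2(r,u-p)=\int_{\partial B_1}\bigl(u_r(x)-p(x)\bigr)^2\,d\sigma,\qquad u_r(x):=\frac{u(rx)}{r^2}.
\end{equation*}
Since $u\in C^{1,1}$ and the blow-up at $0$ is unique, $u_r\to p_*$ uniformly on $\overline{B_1}$, which yields $H_2(0+,u-p)=\int_{\partial B_1}(p_*-p)^2$. Writing $u_r-p=w_r/r^2+(p_*-p)$ and expanding the square, the monotonicity $H_2(r,u-p)\geq H_2(0+,u-p)$ becomes
\begin{equation*}
\frac{d(r)^2}{r^4}+\frac{2d(r)}{r^2}\int_{\partial B_1}\tilde w_r\,(p_*-p)\geq 0,\qquad d(r):=\|w_r\|_{L^2(\partial B_1)}.
\end{equation*}
Since $u\not\equiv p_*$ we have $d(r)>0$ for all small $r$, so dividing by $2d(r)/r^2$ produces
\begin{equation*}
\int_{\partial B_1}\tilde w_r\,(p_*-p)\geq -\frac{d(r)}{2r^2}.
\end{equation*}

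The crucial observation is that $d(r)/r^2\to 0$ as $r\to 0^+$: indeed $d(r)/r^2=\|u_r-p_*\|_{L^2(\partial B_1)}$, which tends to zero by the uniform blow-up convergence $u_r\to p_*$. Evaluating along the subsequence $r_{k_l}\searrow 0$, weak convergence $\tilde w_{r_{k_l}}\to q$ in $W^{1,2}(B_1)$ combined with the compact embedding of traces $W^{1,2}(B_1)\hookrightarrow L^2(\partial B_1)$ yields strong convergence of boundary traces in $L^2(\partial B_1)$, and hence
\begin{equation*}
\int_{\partial B_1}q\,(p_*-p)=\lim_{l\to\infty}\int_{\partial B_1}\tilde w_{r_{k_l}}(p_*-p)\geq \liminf_{l\to\infty}\Bigl(-\frac{d(r_{k_l})}{2r_{k_l}^2}\Bigr)=0.
\end{equation*}

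Nothing in this scheme is genuinely hard once Lemma~\ref{lem:mon-formula} is granted; the only substantive input beyond algebra is the monotonicity of $H_2(r,u-p)$ for the \emph{wrong} polynomial $p\neq p_*$, which in the no-sign setting rests on the sign condition $w\Delta w=p\,\rchi_{\{u=0\}}\geq 0$. This is precisely the reason for restricting to $p\in\cP^+$; if one tried to allow sign-changing $p$, this step would fail, mirroring the obstruction to the full Figalli--Serra program noted in the introduction.
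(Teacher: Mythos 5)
Your argument is correct and is precisely the Monneau-expansion argument that the paper implicitly invokes by citing \cite[Lemma~2.11]{FigSer19}: apply Lemma~\ref{lem:mon-formula}(3) with $\lambda=2$ to $u-p$, rescale $H_2$, expand the square around $u_r-p_*$, subtract the limit $H_2(0+,u-p)=\int_{\partial B_1}(p_*-p)^2$, divide by $2d(r)/r^2>0$, and pass to the limit using compactness of the trace $W^{1,2}(B_1)\hookrightarrow L^2(\partial B_1)$ together with $d(r)/r^2=\|u_r-p_*\|_{L^2(\partial B_1)}\to0$. Your closing remark correctly isolates where the restriction $p\in\cP^+$ enters, namely in guaranteeing $w\Delta w=p\,\rchi_{\{u=0\}}\ge0$ so that Monneau monotonicity is available for the comparison polynomial $p\ne p_*$.
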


\begin{lemma}\label{lem:2-hom-har-poly}
    Let $p_*\in \cP^+$, and suppose that $q\not\equiv0$ is a $2$-homogeneous harmonic polynomial satisfying \eqref{eq:auxi-Monneau}. Then, in an appropriate system of coordinates, \eqref{eq:blow-up-struc-matrix} holds.
\end{lemma}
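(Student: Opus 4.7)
The plan is to test \eqref{eq:auxi-Monneau} against first-order perturbations $p=p_*+\e r$ inside $\cP^+$ and to read off the block structure of the symmetric matrix representing $q$ from the resulting linear inequalities. After an orthogonal change of coordinates I would diagonalize $p_*$, writing $p_*(x)=\tfrac{1}{2}\sum_{i=1}^{n}\mu_i x_i^2$ with $\mu_1,\dots,\mu_k>0$ and $\mu_{k+1}=\cdots=\mu_n=0$, $\sum_i\mu_i=1$, so that $L=\{x_1=\cdots=x_k=0\}$. Identifying $\cP^+$ with the affine slice $\{A\succeq 0:\tr A=1\}$ of the PSD cone via $p(x)=\tfrac{1}{2}x^{T}Ax$ and setting $A_*=\operatorname{diag}(\mu_1,\ldots,\mu_n)$, an admissible first-order perturbation direction is a symmetric matrix $B$ with $\tr B=0$ such that $A_*+\e B\succeq 0$ for some $\e>0$. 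A standard Schur-complement computation identifies this tangent cone: writing
\[
B=\begin{pmatrix}B_{11}&B_{12}\\B_{12}^{T}&B_{22}\end{pmatrix}
\]
in the splitting $\R^{n}=L^{\perp}\oplus L$, admissibility amounts precisely to $B_{22}\succeq 0$.

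Writing $q(x)=x^{T}Qx$ with $Q=Q^{T}$ and $\tr Q=0$ (by harmonicity) and using the standard moment identity $\int_{\partial B_1}x_i x_j x_k x_l\,d\si=c_n(\delta_{ij}\delta_{kl}+\delta_{ik}\delta_{jl}+\delta_{il}\delta_{jk})$, the auxiliary inequality \eqref{eq:auxi-Monneau} reduces to the purely algebraic statement $\tr(BQ)\le 0$ for every admissible $B$. I then extract the block structure of $Q$ by specializing $B$: taking $B_{11}=B_{22}=0$ with $B_{12}$ arbitrary (both signs are admissible, since the constraint $B_{22}\succeq 0$ is trivially met) forces $Q_{12}=0$; taking $B_{12}=0$, $B_{22}=0$, and $B_{11}$ an arbitrary traceless symmetric $k\times k$ matrix forces $Q_{11}=c_{*}I_{k}$ for some scalar $c_{*}$; and compensating $B_{11}=-(t/k)I_{k}$ by a PSD $B_{22}$ with $\tr B_{22}=t$, then optimizing over such $B_{22}$, yields $\la_{\max}(Q_{22})\le c_{*}$. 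Combined with the global trace constraint $\tr Q_{22}=-k c_{*}$, this pins down $Q$, and a further orthogonal rotation within $L$ diagonalizes $Q_{22}$, bringing $q$ into the form asserted in \eqref{eq:blow-up-struc-matrix}.

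The step requiring most care is the tangent-cone description at $A_*$: since $A_*$ has rank only $k<n$, not every traceless symmetric $B$ is a feasible direction, and one must verify that each of the specialized perturbations above actually corresponds to a genuine element of $\cP^+$ rather than merely to a symmetric matrix of correct trace. Once this bookkeeping is done, the remainder of the argument is elementary linear algebra in the space of symmetric matrices and does not require any PDE input beyond the monotonicity/Monneau setup already used to derive \eqref{eq:auxi-Monneau}.
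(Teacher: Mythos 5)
Your reduction via the fourth-moment identity to the algebraic inequality $\tr(BQ)\le 0$, and the subsequent block analysis, follow essentially the same route as the source the paper cites for this lemma (\cite[Lemma~2.12]{FigSer19}), and most steps are fine. But the claim that the feasible-direction cone at $A_*$ is exactly $\{B:\tr B=0,\ B_{22}\succeq 0\}$ is wrong, and it is wrong precisely on the point you flag as delicate. For $A_*+\e B\succeq 0$ with $\e>0$, the Schur complement $\e B_{22}-\e^2 B_{12}^{T}(\tilde A_*+\e B_{11})^{-1}B_{12}\succeq 0$ forces, in addition to $B_{22}\succeq 0$, that $\ker B_{22}\subset\ker B_{12}$. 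Consequently your test direction $B_{11}=B_{22}=0$, $B_{12}\ne 0$ (used in both signs to force $Q_{12}=0$) is \emph{not} a feasible direction: for every $\e>0$ the matrix $A_*+\e B$ is indefinite, as one sees by evaluating its quadratic form at $\bigl(-s\tilde A_*^{-1}B_{12}v,\ v\bigr)$ with $B_{12}v\ne 0$ and $0<s<2\e$. So the bookkeeping you announce at the end (\qq{verify that each of the specialized perturbations above actually corresponds to a genuine element of $\cP^+$}) would fail at exactly this step, not pass.

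The gap is repairable. Since $B\mapsto\tr(BQ)$ is linear, the inequality $\tr(BQ)\le 0$ extends by continuity from the feasible-direction cone to its closure, which \emph{is} $\{B:\tr B=0,\ B_{22}\succeq 0\}$. Concretely, replace your $B$ by $B^{(\delta)}$ with $B^{(\delta)}_{22}=\delta I_m$ (feasible, since $\ker B^{(\delta)}_{22}=\{0\}\subset\ker B_{12}$) and $B^{(\delta)}_{11}=-(\delta m/k)I_k$ to keep the trace zero, derive $\tr(B^{(\delta)}Q)\le 0$, and let $\delta\to 0$. With that patch the remainder of your argument is sound; one should also record explicitly that the positivity $c_*=t>0$ follows by combining $\lambda_{\max}(Q_{22})\le c_*$ with $\tr Q_{22}=-kc_*$ and $q\not\equiv 0$ (otherwise all eigenvalues of $Q_{22}$ would be $\le 0$ while summing to $\ge 0$, forcing $Q=0$).
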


With Lemmas~\ref{lem:blowup-ineq} and \ref{lem:2-hom-har-poly} at hand, we can obtain the following result, which is an analogue of \cite[Proposition~2.10 (a)]{FigSer19}. The proof proceeds in the same manner as that of \cite[Proposition~2.10 (a)]{FigSer19}.

\begin{proposition}
    \label{prop:blow-up-struc}
Let $u$ be a solution of \eqref{eq:no-sign-obst} with $0\in \Sigma^+$. Suppose $m\in \{0,1,2,\cdots,n-2\}$ is the dimension of $L=\{p_*=0\}$. Then we have $\lambda_*\in\{2,3,4,5,\cdots\}$. Moreover, for every sequence $r_k\searrow0$, there exists a subsequence $r_{k_l}$ such that $\tilde w_{r_{k_l}}\to q$ weakly in $W^{1,2}(B_1)$, where $q\not\equiv0$ is a $\lambda_*$-homogeneous harmonic polynomial. In addition, if $\lambda_*=2$, then in an appropriate coordinate frame it holds 
\begin{align}\label{eq:blow-up-struc-matrix}
    D^2p_*=\left(\begin{matrix}  
\begin{array}{c|c}  
  \begin{matrix}
      \mu_1& & \\
      &\ddots& \\
      && \mu_{n-m}
  \end{matrix} & 0_m^{n-m} \\   
  \hline  
  0_{n-m}^m & 0_m^m  
 \end{array}  
\end{matrix}  \right)\quad\text{and}\quad D^2q=   \left(\begin{matrix}  
\begin{array}{c|c}  
  \begin{matrix}
      t& & \\
      &\ddots& \\
      && t
  \end{matrix} & 0_m^{n-m} \\   
  \hline  
  0_{n-m}^m & -N 
 \end{array}  
\end{matrix}  \right),
\end{align}
where $\mu_1,\cdots,\mu_{n-m},t>0$, $\Sigma_{i=1}^{n-m}\mu_i=1$, and $N$ is a symmetric $m\times m$ matrix with $tr\,N=(n-m)t>0$.


\end{proposition}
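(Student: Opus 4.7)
The plan is to mirror the Figalli--Serra proof of \cite[Proposition~2.10(a)]{FigSer19}, taking advantage of the fact that all three monotonicity formulas in Lemma~\ref{lem:mon-formula} and the two auxiliary Lemmas~\ref{lem:blowup-ineq}--\ref{lem:2-hom-har-poly} have already been transplanted to the no-sign setting.

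\emph{Existence of a homogeneous blow-up.} Lemma~\ref{lem:mon-formula}(1) gives immediately $\la_*:=\Phi(0+,w)\ge 2$. A direct computation yields $\int_{B_1}|\D \tilde w_r|^2=\Phi(r,w)\to\la_*$, so together with $\|\tilde w_r\|_{L^2(\partial B_1)}=1$ the rescaled family is bounded in $W^{1,2}(B_1)$. Extracting a weakly convergent subsequence $\tilde w_{r_{k_l}}\rightharpoonup q$ and using the compact trace embedding, one has $\|q\|_{L^2(\partial B_1)}=1$, so $q\not\equiv 0$. For $\la_*$-homogeneity, I would use the scaling identity $W_{\la_*}(\rho,\tilde w_r)=\frac{r^{2\la_*}}{\|w_r\|_{L^2(\partial B_1)}^2}\,W_{\la_*}(r\rho,w)$, combined with Monneau's monotonicity (which provides $\|w_r\|_{L^2(\partial B_1)}^2\sim r^{2\la_*}H_{\la_*}(0+,w)$) and Weiss' monotonicity. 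Letting $r\to 0$ shows that $W_{\la_*}(\rho,q)$ is independent of $\rho$, which via the standard $r$-derivative identity forces $q$ to be $\la_*$-homogeneous.

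\emph{Harmonicity of $q$.} This is where the assumption $m\le n-2$ is really used. Since $\Delta w=-\rchi_{\Lambda(u)}$, rescaling gives $\Delta\tilde w_r(x)=-\frac{r^2}{\|w_r\|_{L^2(\partial B_1)}}\rchi_{\Lambda(u)/r}(x)$. The Monneau bound $\|w_r\|_{L^2(\partial B_1)}\gtrsim r^{\la_*}$ makes the prefactor $O(r^{2-\la_*})$, which is bounded for every admissible $\la_*\ge 2$. On the other hand, because the blow-up of $u$ at $0$ is the nonnegative polynomial $p_*$ whose zero set $L$ has dimension $m\le n-2$, the rescaled coincidence sets cluster around $L$ and one obtains $|B_R\cap(\Lambda(u)/r)|\to 0$ as $r\to 0$ for every $R>0$. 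Pairing $\Delta\tilde w_r$ against a test function $\phi\in C_c^\infty(B_1)$ and using Rellich to pass to the limit on the left, we conclude $\int q\,\Delta\phi=0$, so $q$ is harmonic. Being $\la_*$-homogeneous and harmonic, $q$ is necessarily a polynomial, which forces $\la_*\in\{2,3,4,\dots\}$.

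\emph{The matrix structure when $\la_*=2$.} Lemma~\ref{lem:blowup-ineq} applied to the limit $q$ delivers the auxiliary Monneau inequality \eqref{eq:auxi-Monneau} against every $p\in\cP^+$, and then Lemma~\ref{lem:2-hom-har-poly} hands over the joint matrix representation \eqref{eq:blow-up-struc-matrix} in suitable coordinates. The step I expect to require the most care is the harmonicity of $q$, specifically the quantitative smallness $|B_R\cap(\Lambda(u)/r)|\to 0$. This is precisely where $m\le n-2$ enters: if $m=n-1$, then $L$ has positive codimension-one measure and the coincidence set need not thin out, which is exactly the mechanism that obstructs an analogous structure result for $\Sigma_{n-1}$ in the no-sign framework.
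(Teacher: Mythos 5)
Your overall roadmap matches the paper's (which simply invokes the Figalli--Serra template with Lemmas~\ref{lem:blowup-ineq}--\ref{lem:2-hom-har-poly} already transplanted), and the steps on compactness, $\la_*$-homogeneity via Weiss, non-triviality via the trace, and the $\la_*=2$ matrix structure via Lemmas~\ref{lem:blowup-ineq} and \ref{lem:2-hom-har-poly} are all fine. However, the argument you give for the harmonicity of $q$ is incorrect, and this is precisely the step where the hypothesis $m\le n-2$ does its real work.

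You write that Monneau gives $\|w_r\|_{L^2(\partial B_1)}\gtrsim r^{\la_*}$, hence the prefactor $\frac{r^2}{\|w_r\|_{L^2(\partial B_1)}}$ is $O(r^{2-\la_*})$ and \emph{bounded} for $\la_*\ge 2$. This has the inequality backwards. From $\Phi(r,w)\ge\la_*$ (equivalently, from Monneau's monotonicity of $H_{\la_*}$) one obtains the \emph{upper} bound $\|w_r\|_{L^2(\partial B_1)}\lesssim r^{\la_*}$, which yields the \emph{lower} bound $\frac{r^2}{\|w_r\|_{L^2(\partial B_1)}}\gtrsim r^{2-\la_*}\to\infty$ whenever $\la_*>2$. (Indeed, since $u_{0,r}\to p_*$, $\|w_r\|_{L^2(\partial B_1)}=o(r^2)$ and the prefactor diverges even when $\la_*=2$.) Consequently, the mere qualitative fact $|B_R\cap(\Lambda(u)/r)|\to 0$ does not by itself force $\Delta\tilde w_r\rightharpoonup 0$, so pairing against $\phi\in C_c^\infty$ and invoking Rellich does not close the argument.

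The correct route is the one in \cite[Proposition~2.10]{FigSer19}: the signed measures $\Delta\tilde w_r=-\frac{r^2}{\|w_r\|_{L^2(\partial B_1)}}\chi_{\Lambda(u)/r}$ are nonpositive, and testing against a cutoff that equals $1$ on $B_{1/2}$ shows they have uniformly bounded total variation there; hence they converge weakly-$*$ to $\Delta q=-\mu$ with $\mu\ge 0$ a finite measure. Because the rescaled coincidence sets $\Lambda(u)/r$ converge to $L=\{p_*=0\}$ in the Hausdorff sense, $q$ is harmonic on $B_{1/2}\setminus L$ and $\operatorname{supp}\mu\subset L$. Since $\dim L=m\le n-2$, $L$ has zero $W^{1,2}$-capacity; a nonnegative measure with $q=G\mu+\text{harmonic}\in W^{1,2}$ has finite Newtonian energy and therefore cannot charge a set of zero capacity, so $\mu=0$ and $q$ is harmonic. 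This removable-singularity/capacity step is exactly what fails when $m=n-1$: it is \emph{not}, as you suggest, that the coincidence set fails to thin out (it still has vanishing density at singular points), but that a hyperplane has positive $2$-capacity, so the limit measure may survive and one lands instead on the thin-obstacle-type system \eqref{eq:Alm-blowup-nosign}.

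A smaller remark: once $q$ is known to be harmonic and $\la_*$-homogeneous, it is a homogeneous harmonic polynomial, which gives $\la_*\in\mathbb{N}$; combined with $\la_*\ge 2$ from Lemma~\ref{lem:mon-formula}(1), you get $\la_*\in\{2,3,4,\dots\}$ as stated. That part of your write-up is fine.
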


\begin{remark}
    \cite[Proposition~2.10 (a)]{FigSer19} states that the matrix $N$ in \eqref{eq:blow-up-struc-matrix} is both nonnegative definite and has a positive trace. However, this is a minor mistake, and only the positivity of the trace can be guaranteed, as corrected in \cite{FigRosSer20}. We also stress that the trace of $N$ is still positive in our "no-sign" obstacle problem, thanks to the nonnegativity condition on the blow-up $p_*$.
\end{remark}

\begin{remark}\label{rem:n-1}
In Proposition~\ref{prop:blow-up-struc}, we only consider the case $m\le n-2$. However, in the classical obstacle problem \cite{FigSer19}, the authors obtained the following results when $m=n-1$: for $m=n-1$, $\la_*\ge 2+\al_0$ for some $\al_0=\al_0(n)$. Moreover, for every sequence $r_k\searrow0$, there exists a subsequence $r_{k_l}$ such that $\tilde w_{r_{k_l}}\to q$ weakly in $W^{1,2}(B_1)$, where $q\not\equiv0$ is a $\la_*$-homogeneous solution of
\begin{align}\label{eq:(n-1)-hom-Alm-blowup}
    \begin{cases}
        \Delta q\le0,\,\,\, q\Delta q=0\quad\text{in }\R^n,\\
        \Delta q=0\quad\text{in }\R^n\setminus L,\\
        q\ge0\quad\text{on }L.
    \end{cases}
\end{align}
They obtained the lower bound $\la_*\ge 2+\al_0$ by using \eqref{eq:blow-up-struc-matrix} and \eqref{eq:(n-1)-hom-Alm-blowup}, which plays a significant role in establishing the structure of the top stratum $\Sigma_{n-1}$.

In our case, once we have $\la_*\ge 2+\al_0$, then we can follow the argument in \cite{FigSer19} to obtain the structure of $\Sigma_{n-1}$; see Conjecture~\ref{conj:n-1}. Compared to \cite{FigSer19}, the main difficulty arising in our case is that we do not have $q\ge0$ on $L$ in the equation \eqref{eq:(n-1)-hom-Alm-blowup}. Without the nonnegativity of $q$ on $L$, the blowup $q$ in our case is not a solution of the thin obstacle problem.
\end{remark}




\begin{lemma}\label{lem:y_infty}
For $n\ge 3$, suppose $0\in \Sigma^+$ and $m=\dim(L)\le n-2$. Asume that there exist a sequence of points $x_k\in\Sigma$ converging to $0$ and a sequence of radii $r_k\searrow 0$ with $|x_k|\le r_k/2$ such that
\begin{align*}
    &\tilde w_{r_k}=\frac{(u-p_*)(r_k\cdot)}{\|(u-p_*)(r_k\cdot)\|_{L^2(\partial B_1)}}\to q\quad\text{weakly in }W^{1,2}(B_1),\\
    &y_k:=\frac{x_k}{r_k}\to y_\infty.
\end{align*}
Then $y_\infty\in L$ and $q(y_\infty)=0$.
\end{lemma}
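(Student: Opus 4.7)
My plan is to establish the two conclusions $y_\infty\in L$ and $q(y_\infty)=0$ in sequence: the first via a standard blow-up at $0$, and the second by combining the singularity condition $u(x_k)=0$ with an upgrade of the weak convergence $\tilde w_{r_k}\to q$.

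For $y_\infty\in L$, I would set $v_k(y):=u(r_ky)/r_k^2$. By the $C^{1,1}$-regularity of solutions to \eqref{eq:no-sign-obst} together with the uniqueness of the blow-up $p_*$ at $0$ (\cite{CheFenLi22}), $v_k\to p_*$ in $C^{1,\al}_{\loc}(\R^n)$. Since $x_k\in\Sigma\subset\Lambda(u)$, we have $v_k(y_k)=u(x_k)/r_k^2=0$ and $\D v_k(y_k)=\D u(x_k)/r_k=0$. Passing to the limit $y_k\to y_\infty$ yields $p_*(y_\infty)=0$ and $\D p_*(y_\infty)=0$, so $y_\infty\in L$.

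For $q(y_\infty)=0$, the key identity, coming from $u(x_k)=0$, is
\[
\tilde w_{r_k}(y_k)=\frac{w(x_k)}{\|w_{r_k}\|_{L^2(\partial B_1)}}=-\frac{r_k^2\,p_*(y_k)}{\|w_{r_k}\|_{L^2(\partial B_1)}}.
\]
I would then upgrade $\tilde w_{r_k}\to q$ from weak $W^{1,2}$-convergence to local uniform convergence on $B_1$, using interior elliptic regularity for $\Delta\tilde w_{r_k}=-\frac{r_k^2}{\|w_{r_k}\|_{L^2(\partial B_1)}}\rchi_{\Lambda(u)}(r_k\,\cdot\,)$ together with the Monneau-based identity $\|w_{r_k}\|_{L^2(\partial B_1)}=r_k^{\la_*}\sqrt{H_{\la_*}(r_k,w)}$ and the strict positivity $H_{\la_*}(0+,w)>0$, which follows by combining the two parts of Lemma~\ref{lem:mon-formula}(3). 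With local uniform convergence in hand, $\tilde w_{r_k}(y_k)\to q(y_\infty)$; comparing with the right-hand side of the identity, in which $p_*(y_k)\to p_*(y_\infty)=0$ and $\|w_{r_k}\|_{L^2(\partial B_1)}\ge c\,r_k^{\la_*}$ with $\la_*\ge 2$, then forces $q(y_\infty)=0$.

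The hard part will be the upgrade from weak to uniform convergence in the regime $\la_*>2$, where $\|\Delta\tilde w_{r_k}\|_{L^\infty}$ grows like $r_k^{2-\la_*}$ and so a naive $W^{2,p}$-bound is unavailable; at the same time the right-hand side of the key identity becomes an indeterminate $0/0$ that cannot be resolved by elementary bounds. I would address both issues in tandem by exploiting the small Lebesgue measure of $\Lambda(u)\cap B_{r_k}$ -- tied to the $0\in\Sigma^+$ assumption, so that $\{u<0\}$ has zero-density at $0$ -- to get uniform $L^p$-control of $\Delta\tilde w_{r_k}$ for some $p>n$; this restores Arzel\`a--Ascoli compactness and yields local $C^{1,\al}$-bounds, while a finer Taylor estimate for $p_*$ near $y_\infty\in L$ combined with the automatic boundedness of the left-hand side pins down the indeterminate form and gives $q(y_\infty)=0$.
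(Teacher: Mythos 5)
Your argument for $y_\infty\in L$ is fine and matches the standard blow-up argument. The argument for $q(y_\infty)=0$, however, diverges from the paper's route and, as written, has several genuine gaps. First, the claim that $H_{\la_*}(0+,w)>0$ ``follows by combining the two parts of Lemma~\ref{lem:mon-formula}(3)'' is not justified: part (3) only yields that $H_{\la_*}(r,w)$ is nondecreasing (so $H_{\la_*}(0+)\ge 0$) and that $H_\la\to\infty$ for $\la>\la_*$; it does not rule out $H_{\la_*}(0+)=0$ (e.g.\ $\dashint_{\partial B_r}w^2\sim r^{2\la_*}/\log(1/r)$ is entirely consistent with Almgren and Monneau), and nondegeneracy of the kind $\|w_{r_k}\|\ge c\,r_k^{\la_*}$ is precisely what is \emph{not} available in general at singular points. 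Second, the proposed $L^p$-bound on $\Delta\tilde w_{r_k}=-\frac{r_k^2}{\|w_{r_k}\|}\chi_{\Lambda/r_k}$ requires the product $\frac{r_k^2}{\|w_{r_k}\|}\,|\Lambda(u)/r_k\cap B_1|^{1/p}$ to be bounded; the first factor behaves like $r_k^{2-\la_*}$ and can diverge arbitrarily fast (recall $\la_*\in\{2,3,4,\dots\}$ can be any integer by Proposition~\ref{prop:blow-up-struc}), while the decay of $|\Lambda(u)/r_k\cap B_1|$ is a priori only qualitative near a singular point of stratum $m\le n-2$ -- and, in any case, it is controlled by the vanishing of $p_*$ off $L$, not by the zero-density of $\{u<0\}$ as you assert. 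Third, and most critically, even granting local uniform convergence, your final step does not close: the left side of the key identity being bounded only shows $q(y_\infty)$ is finite, and the ``finer Taylor estimate'' for $p_*$ near $y_\infty$ gives at best $|p_*(y_k)|\lesssim\|w_{r_k}\|_{L^\infty}/r_k^2$, which makes the right side $O(1)$, not $o(1)$; nothing in the argument forces the limit to be $0$.

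The paper's proof sidesteps all of this by working at the $L^2$ level: one applies the Almgren and Monneau monotonicity formulas to $u(x_k+\cdot)-p_*$ centered at $x_k$ (which is legitimate since $x_k\in\Sigma$ and $p_*\in\cP^+$, so Lemma~\ref{lem:mon-formula} applies to the shifted solution). This yields an $L^2(\partial B_\rho)$ decay estimate for the rescaled error around $y_k$ that only requires weak $W^{1,2}$-convergence (and hence strong $L^2_{\loc}$-convergence via Rellich) to pass to the limit, and in particular needs neither $C^0$-convergence nor any nondegeneracy of $\|w_{r_k}\|$. This is why the statement is provable without the additional machinery you would need to make the pointwise route rigorous. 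If you wish to pursue your route, you would need to separately establish quantitative nondegeneracy at singular points and a quantitative density estimate on $\Lambda(u)$, neither of which is available at the level of generality of this lemma.
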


\begin{proof}
    The proof follows from Almgren- and Monneau- monotonicity formulas applied to $u(x_k+\cdot)-p_{*}$; see \cite[Lemma~3.2]{FigSer19}.
\end{proof}

\begin{lemma}\label{lem:anom-1-discrete}
    If $n\ge3$, then $\Sigma_{1,a}^+$ is a discrete set.
\end{lemma}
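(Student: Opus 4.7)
The plan is to argue by contradiction, combining Lemma~\ref{lem:y_infty} with the explicit form of the blow-up polynomial provided by Proposition~\ref{prop:blow-up-struc}. Suppose $\Sigma_{1,a}^+$ is not discrete. Then there is a point $x^0 \in \Sigma_{1,a}^+$ and a sequence $\Sigma_{1,a}^+ \ni x_k \to x^0$ with $x_k \neq x^0$. After a translation I may assume $x^0 = 0$. The defining inequality of $\Sigma_{1,a}^+$ gives $\la_* < 3$, while Proposition~\ref{prop:blow-up-struc} (applicable since $m = 1 \le n-2$ for $n\ge3$) forces $\la_* \in \{2,3,4,\ldots\}$, so $\la_* = 2$.

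Next I rescale at the doubled scale $r_k := 2|x_k| \searrow 0$, so that $y_k := x_k/r_k$ satisfies $|y_k| = 1/2$ and the hypothesis $|x_k| \le r_k/2$ of Lemma~\ref{lem:y_infty} is met. Passing to subsequences, $y_k \to y_\infty \in \partial B_{1/2}$ and $\tilde w_{r_k} \to q$ weakly in $W^{1,2}(B_1)$, where, by Proposition~\ref{prop:blow-up-struc}, $q \not\equiv 0$ is a $2$-homogeneous harmonic polynomial with $D^2q$ of the form \eqref{eq:blow-up-struc-matrix}. Since $m = 1$, the $m\times m$ block $N$ is just the scalar $\operatorname{tr} N = (n-1)t$, so in appropriate coordinates $L = \R e_n$ and
\[
q(x) = \frac{t}{2}\bigl(x_1^2 + \cdots + x_{n-1}^2\bigr) - \frac{(n-1)t}{2}\,x_n^2, \qquad t>0.
\]
Lemma~\ref{lem:y_infty} now asserts $y_\infty \in L$ together with $q(y_\infty) = 0$. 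The first condition pins $y_\infty = \pm \tfrac{1}{2}e_n$, but then the formula above yields $q(y_\infty) = -(n-1)t/8 < 0$ for every $n\ge 3$, contradicting the second condition.

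There is no genuine obstacle in this argument once the earlier tools are established; the only two points that require a little care are (i) choosing the doubled scale $r_k = 2|x_k|$ rather than $r_k = |x_k|$, in order to meet the hypothesis of Lemma~\ref{lem:y_infty} and ensure that $y_\infty$ lies strictly away from the origin where $q$ could vanish trivially; and (ii) exploiting the fact that in the stratum $m = 1$ the block $N$ is forced to be strictly positive because it coincides with its positive trace, so that $q < 0$ on $L\setminus\{0\}$. Note that for $m \ge 2$ the matrix $N$ is only known to have positive trace (and need not be positive definite), which is precisely why the analogue of this discreteness result fails for higher strata and one must content oneself with a Hausdorff-dimension bound as in Theorem~\ref{thm:main}(b).
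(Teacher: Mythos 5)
Your argument is correct and is essentially the same one the paper relies on, namely the proof of \cite[Lemma~3.4]{FigSer19}: assume non-discreteness, rescale at the doubled distance $r_k=2|x_k|$, invoke Lemma~\ref{lem:y_infty} to place $y_\infty$ on $L\cap\partial B_{1/2}$ with $q(y_\infty)=0$, and contradict this via the sign of the $2$-homogeneous harmonic blow-up $q$ dictated by \eqref{eq:blow-up-struc-matrix} when $m=1$. Your observation that for $m=1$ the $1\times1$ block $N$ coincides with its (positive) trace — so that $q<0$ on $L\setminus\{0\}$ — is exactly the point that makes the $m=1$ stratum discrete while higher strata only admit the Hausdorff-dimension bound.
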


\begin{proof}
With Lemma~\ref{lem:y_infty} at hand, we can argue as in \cite[Lemma~3.4]{FigSer19} to obtain Lemma~\ref{lem:anom-1-discrete}.
\end{proof}

\begin{lemma}\label{lem:anom-dim}
Suppose $n\ge4$ and $2\le m\le n-2$. Then $\dim_{\mathcal{H}}(\Sigma^+_{m,a})\le m-1$.    
\end{lemma}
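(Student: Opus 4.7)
The plan is to mirror the Federer-style dimension reduction of \cite[Lemma~3.6]{FigSer19}: at an anomalous point $x^0$ the Almgren blowup $q$ vanishes on a thin algebraic subset of $L_{x^0}$, and Lemma~\ref{lem:y_infty} forces every nearby singular point to cluster along this thin subset at the appropriate scale.

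First I would pin the frequency. Since $\la_{*,x^0}<3$ by definition of $\Sigma_{m,a}^+$, while Proposition~\ref{prop:blow-up-struc} forces $\la_*\in\{2,3,4,\ldots\}$, one must have $\la_{*,x^0}=2$. Hence every weak $W^{1,2}$-subsequential limit $q$ of the normalized displacements $\tilde w_r$ at $x^0$ is a $2$-homogeneous harmonic polynomial whose Hessian has the block form \eqref{eq:blow-up-struc-matrix}. Restricting $q$ to $L_{x^0}\cong\R^m$ yields the quadratic form $-\tfrac{1}{2}(y'')^T N y''$, which is nontrivial because $\tr N=(n-m)t>0$. Therefore
$$
Z^q_{x^0}:=\{q=0\}\cap L_{x^0}
$$
is a real algebraic subvariety of $L_{x^0}$ of dimension at most $m-1$; in particular, for every $\e>0$ and $\delta>0$, the $\delta$-neighborhood of $Z^q_{x^0}\cap\overline{B_{1/2}}$ admits a cover by $\sim\delta^{-(m-1)}$ balls of radius $\delta$, and hence has Hausdorff $(m-1+\e)$-content at most $C\delta^\e$.

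Next I would transfer this thinness to the singular set via Lemma~\ref{lem:y_infty}. Fixing $x^0\in\Sigma_{m,a}^+$ and $r_k\to 0$ with $\tilde w_{r_k}\to q$ at $x^0$, Lemma~\ref{lem:y_infty} asserts that any subsequential limit of $(x_k-x^0)/r_k$, with $x_k\in\Sigma$ and $|x_k-x^0|\le r_k/2$, must lie in $Z^q_{x^0}\cap\overline{B_{1/2}}$. A standard compactness argument then upgrades this pointwise statement to
$$
\sup_{y\in A_k}\dist(y,Z^q_{x^0})\longrightarrow 0,\qquad A_k:=\{(x-x^0)/r_k : x\in\Sigma,\ |x-x^0|\le r_k/2\},
$$
so the neighborhood estimate of the previous paragraph yields $\cH^{m-1+\e}_\infty(A_k)\to 0$ for every $\e>0$.

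To close the argument I would invoke the standard density argument for Hausdorff content. Assume for contradiction that $\cH^{m-1+\e}(\Sigma_{m,a}^+)>0$ for some $\e>0$; then one produces $x^0\in\Sigma_{m,a}^+$ and $r_k\to 0$ with
$$
\cH^{m-1+\e}_\infty\bigl(\Sigma_{m,a}^+\cap B_{r_k/2}(x^0)\bigr)\ge c_0 r_k^{m-1+\e}.
$$
Passing to a further subsequence along which $\tilde w_{r_k}\to q$ at $x^0$ and using the scaling invariance $\cH^{m-1+\e}_\infty(r_kE)=r_k^{m-1+\e}\cH^{m-1+\e}_\infty(E)$ together with $\cH^{m-1+\e}_\infty(A_k)\to 0$, one obtains $\cH^{m-1+\e}_\infty(\Sigma_{m,a}^+\cap B_{r_k/2}(x^0))=o(r_k^{m-1+\e})$, contradicting the density lower bound. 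Thus $\dim_{\cH}(\Sigma_{m,a}^+)\le m-1$.

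The main obstacle is really only the bookkeeping in the density-point statement for Hausdorff content (as opposed to Hausdorff measure), which is standard but requires some care. All the structural inputs---the integer jump of $\la_*$, the block form of $D^2 q$ with $\tr N>0$, and the clustering Lemma~\ref{lem:y_infty}---carry over from \cite{FigSer19} to the no-sign setting without modification, so no essentially new difficulty appears once Proposition~\ref{prop:blow-up-struc} is in hand.
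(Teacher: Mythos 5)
Your proof is correct and follows the same Federer-style dimension-reduction argument as \cite[Lemma~3.6]{FigSer19}, which is precisely what the paper's one-line proof refers to. The key ingredients you identify---forcing $\la_{*,x^0}=2$ via Proposition~\ref{prop:blow-up-struc}, the nontrivial restriction of $q$ to $L_{x^0}$ coming from the positive trace of $N$, the clustering given by Lemma~\ref{lem:y_infty}, and the Hausdorff-content density argument---are exactly the inputs used there.
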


\begin{proof}
The proof is similar to that of \cite[Lemma~3.6]{FigSer19}.    
\end{proof}

To proceed, we need the following version of Whiteney's Extension Theorem. See e.g., \cite{Fef09}.

\begin{lemma}[Whitney's Extension Theorem]\label{lem:Whitney}
Let $0<\be\le1$, $l\in \mathbb{N}$, $K\subset\R^n$ a compact set, and $f:K\to\R$ a given function. Suppose that for every $x^0\in K$, there exists a polynomial $P_{x^0}$ of degree $l$ such that
\begin{enumerate}[(i)]
    \item $P_{x^0}(x^0)=f(x^0)$,
    \item $|D^kP_{x^0}(x^0)-D^kP_{x^1}(x^0)|\le C|x^0-x^1|^{l+\be-k}$ for all $x\in K$ and $k\in\{0,1,\cdots,l\}$, where $C>0$ is a constant independent of $x^0$.
\end{enumerate}
Then, there exists a function $F\in C^{l,\be}(\R^n)$ such that 
$$
F|_K=f\quad\text{and}\quad F(x)=P_{x^0}(x)+O\left(|x-x^0|^{l+\be}\right) \,\,\,\text{for any }x^0\in K. 
$$
\end{lemma}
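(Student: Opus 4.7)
The strategy is the classical Whitney construction by decomposition and partition of unity. First I would perform a Whitney decomposition of the open set $U := \R^n \setminus K$ into a countable collection of closed dyadic cubes $\{Q_j\}$ with pairwise disjoint interiors, satisfying $\diam(Q_j) \le \dist(Q_j, K) \le 4\,\diam(Q_j)$. I would then fix a smooth partition of unity $\{\vp_j\}$ on $U$ subordinate to slightly dilated cubes $Q_j^*$, with the standard derivative estimates $|D^k \vp_j(x)| \le C_k\, \diam(Q_j)^{-k}$ for all $k \le l+1$, and with bounded overlap of the $Q_j^*$.

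Next, for each $j$ I would pick $p_j \in K$ realizing $\dist(Q_j, K)$, and define
\[
F(x) := \begin{cases} f(x), & x \in K, \\ \sum_j \vp_j(x)\, P_{p_j}(x), & x \in U. \end{cases}
\]
On $U$, $F$ is smooth because the sum is locally finite. The content of the lemma lies in showing that $F \in C^{l,\be}(\R^n)$ and that $D^\alpha F(x^0) = D^\alpha P_{x^0}(x^0)$ for every $x^0 \in K$ and every multi-index $|\alpha| \le l$.

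The central estimate is carried out at $x \in U$ near some $x^0 \in K$. Using $\sum_j \vp_j \equiv 1$ on $U$, I would write
\[
D^\alpha F(x) - D^\alpha P_{x^0}(x) = \sum_{j:\, x \in Q_j^*} D^\alpha\bigl(\vp_j(x)(P_{p_j}(x) - P_{x^0}(x))\bigr).
\]
For each contributing $j$, the quantities $|x - p_j|$, $|x - x^0|$, and $|p_j - x^0|$ are all comparable to $\dist(x, K)$. Taylor-expanding $P_{p_j} - P_{x^0}$ around $x^0$ and invoking hypothesis (ii) on the coefficients yields $|D^k(P_{p_j} - P_{x^0})(x)| \le C|p_j - x^0|^{l+\be-k}$ for $k \le l$. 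Combined with the partition-of-unity bounds and the finite overlap of the $Q_j^*$, this gives $|D^\alpha F(x) - D^\alpha P_{x^0}(x)| \le C|x - x^0|^{l+\be-|\alpha|}$, which simultaneously produces the $C^l$ extension across $K$ (letting $x \to x^0$) and, for $|\alpha|=l$, the $\be$-Hölder bound for $D^\alpha F$ on $U \cup K$.

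The main obstacle is the polynomial-difference lemma linking $D^k(P_{p_j} - P_{x^0})(x)$ for $x \notin K$ back to hypothesis (ii), which is stated only at points of $K$. One expands $P_{p_j} - P_{x^0}$ around $x^0$ in Taylor form; the coefficients at $x^0$ are controlled by (ii) with weight $|p_j - x^0|^{l+\be-k}$, and the resulting polynomial increment from $x^0$ to $x$ is absorbed by the comparability $|x - x^0| \le C|p_j - x^0|$. All other bookkeeping—continuity of $D^\alpha F$ at points of $K$, the identification $D^\alpha F(x^0) = D^\alpha P_{x^0}(x^0)$, and the Hölder estimate between a pair of points in $K$—reduces to this single consistency computation together with (ii) itself.
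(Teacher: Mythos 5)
The paper does not actually prove this lemma; it states it and cites Fefferman \cite{Fef09} (see also the textbook treatments in Stein or Whitney's original paper). Your sketch is the classical Whitney decomposition and partition-of-unity construction, which is indeed the standard proof, and the outline is sound: decompose $\R^n\setminus K$ into Whitney cubes, glue the local polynomials $P_{p_j}$ with a subordinate partition of unity, and reduce everything to controlling $D^k(P_{p_j}-P_{x^0})$ via Taylor expansion about $x^0$ together with hypothesis (ii) and the comparability of $\diam(Q_j)$, $|p_j-x^0|$ with $|x-x^0|$. One small imprecision: you assert that $|x-x^0|$, $|x-p_j|$ and $|p_j-x^0|$ are all \emph{comparable} to $\dist(x,K)$; in fact $\dist(x,K)\le|x-x^0|$ but $|x-x^0|$ need not be bounded above by $\dist(x,K)$ when $x^0$ is not the closest point of $K$. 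What is actually needed (and suffices) is the one-sided bound $\diam(Q_j),\,|x-p_j|,\,|p_j-x^0|\le C\,|x-x^0|$ for contributing $j$, which does hold, so the estimate you want still goes through.
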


Next, we prove the structure of $S_{m,\la}^+$. The proof is similar to that of \cite[Lemma~3.1]{FigSer19}, but we need some modifications as we do not have the closedness of $\Sigma^+$ and $S_\la^+$.

\begin{lemma}\label{lem:S-class}
Let $n\ge2$, $1\le m\le n-1$ and $\la>2$ be given. Let $l\in\mathbb{N}$ and $0<\be\le1$ be such that $l+\be=\la$. Then $S_{m,\la}^+$ is locally contained in an $m$-dimensional manifold of class $C^{l-1,\be}$.
\end{lemma}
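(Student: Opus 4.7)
The strategy is to apply Whitney's Extension Theorem (Lemma~\ref{lem:Whitney}) to the Taylor data provided by the blow-up polynomials, and then extract the $m$-dimensional manifold via the implicit function theorem applied to the gradient of the resulting extension. For each $x^0\in S_{m,\la}^+$ I set
$$P_{x^0}(x):=p_{*,x^0}(x-x^0),$$
viewed as a polynomial of degree $l$ with vanishing coefficients in orders $\ge 3$ (note that $\la>2$ and $0<\be\le 1$ force $l\ge 2$). Then $P_{x^0}(x^0)=0$, $\D P_{x^0}(x^0)=0$, and $D^2P_{x^0}(x^0)=D^2p_{*,x^0}$ has kernel of dimension exactly $m$ by Proposition~\ref{prop:blow-up-struc}. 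In particular Whitney's hypothesis (i) holds with $f\equiv 0$.

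The heart of the proof is the compatibility condition (ii). Since $\la_{*,x^0}\ge\la$ for every $x^0\in S_{m,\la}^+$, Monneau's monotonicity (Lemma~\ref{lem:mon-formula}(3)) applied to $u(x^0+\cdot)-p_{*,x^0}$ yields
$$\int_{\partial B_r(x^0)}(u-P_{x^0})^2\le C r^{n-1+2\la},$$
and, integrating in $r$,
$$\int_{B_r(x^0)}(u-P_{x^0})^2\le C r^{n+2\la}.$$
For $x^0,x^1\in S_{m,\la}^+$ with $\rho:=|x^0-x^1|$ small, applying the ball bound at both centers over $B_\rho(x^1)\subset B_{2\rho}(x^0)$ and using the triangle inequality gives
$$\int_{B_\rho(x^1)}(P_{x^0}-P_{x^1})^2\le C\rho^{n+2\la}.$$
Since $Q:=P_{x^0}-P_{x^1}$ is a polynomial of degree $\le 2$, rescaling $y=(x-x^1)/\rho$ and exploiting the equivalence of norms on the finite-dimensional space of quadratic polynomials yields $|D^kQ(x^1)|\le C\rho^{\la-k}$ for $k=0,1,2$, while $D^kQ\equiv 0$ trivially for $k\ge 3$. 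Taylor expanding each $D^kQ$ from $x^1$ to $x^0$ (exact since $Q$ is quadratic) then gives the same bounds at $x^0$, which is Whitney's condition (ii).

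The subtlety absent from \cite[Lemma~3.1]{FigSer19} is that $S_{m,\la}^+$ is not known to be closed, while Lemma~\ref{lem:Whitney} requires a compact domain. I would handle this by fixing $x_*\in S_{m,\la}^+$, working inside a small closed ball $\overline{B_{r_0}(x_*)}$, and passing to $K:=\overline{S_{m,\la}^+\cap B_{r_0}(x_*)}$. For any sequence $S_{m,\la}^+\ni x_k\to y\in K$, uniform $C^{1,1}$ bounds on $u$ control the coefficients of $p_{*,x_k}$, while the compatibility estimate just derived (applied with $x^0=x_k$, $x^1=x_{k'}$) forces $\{p_{*,x_k}\}$ to be Cauchy and hence to converge to a unique $\tilde p\in\cP^+$. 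Setting $P_y(x):=\tilde p(x-y)$ extends the assignment continuously to $K$; the compatibility inequalities pass to the limit, and Lemma~\ref{lem:Whitney} produces $F\in C^{l,\be}(\R^n)$ with $F(x)=P_{x^0}(x)+O(|x-x^0|^{\la})$ at every $x^0\in K$.

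Finally, $F$ and $\D F$ vanish on $K$, and the Jacobian $D(\D F)(x_*)=D^2p_{*,x_*}$ has rank $n-m$. Choosing $n-m$ components of $\D F\in C^{l-1,\be}$ whose differentials at $x_*$ are linearly independent and invoking the implicit function theorem (valid since $l-1\ge 1$) produces an $m$-dimensional $C^{l-1,\be}$ manifold of common zeros containing $K\supset S_{m,\la}^+\cap B_{r_0}(x_*)$. The main obstacle I foresee is the closure step: verifying that limits of $S_{m,\la}^+$-sequences admit unique and well-behaved blow-up polynomials in $\cP^+$ with frequency at least $\la$, so that the Whitney data extends continuously to $K$. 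The Whitney and implicit function theorem steps themselves are routine once the compatibility estimate and its limiting form are in place.
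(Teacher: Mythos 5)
Your proof is correct and follows essentially the same route as the paper's: Monneau monotonicity gives the $L^2$ decay, rescaling and norm equivalence on quadratics give the Whitney compatibility bounds for $k\le 2$ (trivially for $k\ge 3$), the $k=2$ estimate with $\la>2$ makes the map $x\mapsto p_{*,x}$ Cauchy so the data extends to the closure, and Whitney plus the implicit function theorem finish. The only cosmetic deviation is that you take the closure of $S_{m,\la}^+$ while the paper works with the closure of the larger set $S_\la^+$; also note that the limiting polynomials $\tilde p$ need not themselves be blow-ups of $u$ at the limit point (which you flag as a worry) — they are merely Whitney jets, and the Cauchy property is all that is required, exactly as the paper observes.
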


\begin{proof}
We define 
$$
P_{x^0}(x):=p_{*,x^0}(x-x^0),\quad x^0\in S_\la^+\cap B_{1/2}.
$$
For such $x^0$, we have by Monneau's monotonicity formula
$$
\|u(x^0+\rho\cdot)-p_{*,x^0}(\rho\cdot)\|_{L^2(\partial B_1)}\le (2\rho)^\la\left\|u\left(x^0+\frac12\cdot\right)-p_{*,x^0}\left(\frac12\cdot\right)\right\|_{L^\infty(\partial B_1)},\quad 0<\rho\le 1/2.
$$
Integrating in $\rho$ gives
$$
\|u(x^0+\rho\cdot)-p_{*,x^0}(\rho\cdot)\|_{L^2(B_1)}\le C\rho^\la,\quad 0<\rho\le1/2.
$$
This corresponds to equation~(3.21) in \cite{FigSer19}. With that estimate at hand, we can repeat the argument in \cite{FigSer19} to get
\begin{align}
    \label{eq:P-diff-est}
    |D^kP_{x^0}(x^0)-D^kP_{x^1}(x^0)|\le C|x^0-x^1|^{l+\be-k}
\end{align}
for all $x^0,x^1\in S_\la^+\cap B_{1/2}$ and $k\in\{0,1,\cdots,l\}$.

We considered $P_{x^0}$ for $x^0$ contained in $S^+_\la$ above. Now we extend the range of $x^0$ from $S_\la^+$ to $\overline{S_\la^+}$. For a compact set $K:=\overline{S^+_\la}\cap \overline{B_{1/4}}$, we define $P_{x^0}$ for $x^0\in K$ as
$$
    P_{x^0}(x):=\begin{cases}
    p_{*,x^0}(x-x^0),& x^0\in S^+_\la\cap\overline{B_{1/4}},\\
    \lim_{S_\la^+\ni x^j\to x^0}P_{x^j}(x),& x^0\in (\overline{S^+_\la}\setminus S^+_\la)\cap\overline{B_{1/4}}.
    \end{cases}
$$
Here, $P_{x^0}$ is well-defined for $x^0\in (\overline{S^+_\la}\setminus S^+_\la)\cap\overline{B_{1/4}}$ thanks to \eqref{eq:P-diff-est}. Indeed, from the condition $l+\be=\la>2$, we find $l\ge2$. Thus, if we write $p_{*,x^0}(x)=\frac12\mean{A_{x^0}x,x}$ for every $x^0\in S_\la^+$, then the estimate \eqref{eq:P-diff-est} applied with $k=2$ gives $|A_{x^0}-A_{x^1}|\le C|x^0-x^1|^{l+\be-2}$ for any $x^0,x^1\in S_\la^+\cap B_{1/2}$. This implies that $S_\la^+\cap \overline{B_{1/4}}\ni x^j\mapsto P_{x^j}=p_{*,x^j}(\cdot-x^j)$ is a Cauchy sequence, hence convergent to a quadratic polynomial.

We observe that $P_{x^0}(x^0)=0$ for every $x^0\in \overline{S^+_\la}\cap \overline{B_{1/4}}$ and that \eqref{eq:P-diff-est} holds for any $x^0,x^1\in \overline{S_\la^+}\cap\overline{B_{1/4}}$ and $0\le k\le l$ since it holds for any $x^0,x^1\in \overline{S_\la^+}\cap\overline{B_{1/4}}$ and $k\le 2$ (based on the argument above) and the norm $\|\cdot\|_{L^\infty(B_1)}$ is equivalent to the norm $\|\cdot\|_{C^l(B_1)}$ on the space of quadratic polynomials. Then $K$, $\{P_{x^0}\}_{x^0\in K}$, $f\equiv 0$ satisfy the assumptions of Whitney's Extension Theorem (Lemma~\ref{lem:Whitney}), and hence there exists a function $F\in C^{l,\be}(\R^n)$ such that 
$$
F(x)=P_{x^0}(x)+O\left(|x-x^0|^{l+\be}\right),\quad x^0\in K.
$$
In particular, we have
$$
F(x)=p_{*,x^0}(x-x^0)+O\left(|x-x^0|^{l+\be}\right)\quad\text{for }x^0\in S^+_\la\cap \overline{B_{1/4}}.
$$
Thus $$
S^+_{m,\la}\cap B_{1/4}\subset K\subset \{\D F=0\}.
$$
As $\dim(\text{Ker}(D^2F(x^0)))=\dim(\text{Ker}(D^2p_{*,x^0}(0)))=m$ for every $x^0\in S^+_{m,\la}\cap B_{1/4}$, we find that, after taking a change of coordinates if necessary, $\text{Rank}(D^2_{(x_1,\cdots,x_{n-m})}F(x^0))=n-m$ is maximal. Now, Lemma~\ref{lem:S-class} follows by the Implicit Function Theorem.
\end{proof}

Now we prove our central result, Theorem~\ref{thm:main}.

\begin{proof}[Proof of Theorem~\ref{thm:main}]
We first prove $(a)$. As $\Sigma_{n-1,g}=S_{n-1,5/2}$, its $C^{1,1/2}$ structure follows from Lemma~\ref{lem:S-class}. Since $x^0\longmapsto \Phi(0+,u(x^0+\cdot)-p_{*,x^0})$ is upper semicontinuous, $\Sigma_{n-1,g}=S_{n-1,5/2}=\{x^0\in \Sigma_{n-1}\,:\, \Phi(0+,u(x^0+\cdot)-p_{*,x^0}\ge5/2\}$ is relatively closed in $\Sigma_{n-1}$, and thus $\Sigma_{n-1,a}$ is relatively open. 

Next, we prove $(b)$. As $\Sigma_{m,g}^+=S^+_{m,3}$, its $C^{1,1}$ structure follows from Lemma~\ref{lem:S-class}. The relative openness of $\Sigma_{m,a}^+$ can be obtained as in $(a)$. The remaining part in $(b)$ can be found in Lemma~\ref{lem:anom-1-discrete} and Lemma~\ref{lem:anom-dim}.

Finally, we prove $(c)$. It can be obtained in a similar way as in the proof of Theorem~1.1 $(e)$ in \cite{FigSer19}. The only nontrivial part is Caffarelli's asymptotic convexity estimate, the lower bound of the second derivatives of solutions; see equation~(3.24) in \cite{FigSer19}. 

We now show that the similar estimate holds in our case under the additional assumption $u\in P(M,\omega,\e,x^0)$. Specifically, we claim that for any $r>0$ small enough
\begin{align}
    \label{eq:second-deriv-bound}
    D_{ii}u\ge -C[\log(1/r)]^{-\be} \quad\text{in }B_r(x^0),\quad x^0\in \Sigma_m^+,\quad 1\le i\le n,
\end{align}
where $\be=\frac1{2(n-1)}$. To prove the claim, we follow the idea in the proof of Lemma~4.5 in \cite{KarSha99}. Let 
$$
M_k:=\sup\{-\inf_{B_{2^{-k}}(y)}D_{ii}u\,:\, y\in \Gamma\cap\overline{B_{\e/4}}\},\quad k\ge1.
$$
Note that $M_{k+1}\le M_k\le M$. Let $k_0$ be the smallest integer such that $2^{-(k_0+1)}\le \e/4$. For $k\ge k_0$, by applying \cite[Lemma~4.6]{KarSha99} (with $\rho=2^{-(k+1)}$ and $\delta=\delta_k\in(0,1)$ to be chosen later) and Harnack Inequality, we can obtain
$$
D_{ii}u+M_k\ge C\delta_k^{n-1}(M_k-C\sqrt{\delta_k}-C\omega(2^{-k}))\quad\text{in }B_{2^{-(k+1)}}(y).
$$
This implies
$$
-M_{k+1}+M_k\ge C\delta_k^{n-1}(M_k-\tilde C\sqrt{\delta_k}-C\omega(2^{-k})).
$$
Taking $\delta_k=\e_0 M_k^2$ with $\e_0>0$ small enough so that $\delta_k<1$ and $M_k>(\tilde C+1)\sqrt{\delta_k}$, we get 
$$
-M_{k+1}+M_k\ge CM_k^{2n-1}-CM_k^{2n-2}\omega(2^{-k}).
$$
As the logarithmic decay rate of $\omega$ gives $\omega(2^{-k})\le C(1/k)^\be$, we further have
$$
-M_{k+1}+M_k\ge CM_k^{2n-1}-CM_k^{2n-2}(1/k)^\be,\quad k\ge k_0.
$$
Then, Lemma~\ref{lem:M_k-est} in Appendix gives that for some $K_0\in \mathbb{N}$ and $C_0>0$
$$
M_k\le C_0(1/k)^\be,\quad k\ge K_0.
$$
This implies \eqref{eq:second-deriv-bound} for small $r>0$, and concludes $(c)$.
\end{proof}


\section{Discussion (Structure of the top stratum)}\label{sec:diss}
In Theorem~\ref{thm:main} (a), we established the structure of $\Sigma_{n-1,g}$ and showed that $\Sigma_{n-1,a}$ is relatively open. Additionally, we propose the following conjecture regarding the structure of the top stratum $\Sigma_{n-1}$.
\begin{conjecture}\label{conj:n-1}
    Let $u$ be a solution of \eqref{eq:no-sign-obst} in $B_1$.
    \begin{enumerate}[(a)]
    \item For $n=2$, $\Sigma_{1}$ is locally contained in a $C^{1,1/2}$ curve.
  \item For $n\ge 3$, $\Sigma_{n-1,a}$ is a discrete set if $n=3$, while $\dim_{\cH}(\Sigma_{n-1,a})\le n-3$ when $n\ge 4$.
  \item For $n\ge3$, $\Sigma_{n-1}$ is locally covered by a $C^{1,\al_0}$ $(n-1)$-dimensional manifold for some $\al_0=\al_0(n)\in(0,1)$.
  \end{enumerate}
\end{conjecture}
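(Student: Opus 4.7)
The plan is to execute, in the signless setting, the same three-tier strategy that Figalli--Serra~\cite{FigSer19} use for the classical obstacle problem: (i) produce a frequency gap $\la_{*,x^0}\ge 2+\al_0$ at every point of $\Sigma_{n-1}$, with $\al_0=\al_0(n)>0$; (ii) feed this gap into Lemma~\ref{lem:S-class} to obtain a $C^{1,\al_0}$ covering; (iii) apply Federer-type dimension reduction to bound the size of the anomalous stratum. The monotonicity and blowup apparatus of Lemma~\ref{lem:mon-formula}, Proposition~\ref{prop:blow-up-struc}, Lemmas~\ref{lem:blowup-ineq}--\ref{lem:2-hom-har-poly} and Lemma~\ref{lem:y_infty} is independent of whether $m=n-1$ or $m\le n-2$ and carries over with no change.

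\textbf{Step (i).} This is the genuine bottleneck and the reason the statement remains a conjecture. Suppose $0\in\Sigma_{n-1}$ with $\la_*=2$. By the blowup scheme underlying Proposition~\ref{prop:blow-up-struc}, the normalized remainder $\tilde w_{r_k}$ subconverges weakly in $W^{1,2}(B_1)$ to a nontrivial $2$-homogeneous $q$ that is harmonic outside $L$ and satisfies $\Delta q\le 0$ and $q\Delta q=0$ in $\R^n$, but---as stressed in Remark~\ref{rem:n-1}---\emph{with no sign constraint on $L$}. In \cite{FigSer19} the condition $q\ge 0$ on $L$ promotes $q$ to a Signorini solution, and a frequency gap for the thin obstacle problem produces $\al_0>0$ (and the sharper $\la_*\ge 5/2$ in $n=2$). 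I would try to recover such a gap by either (A) a direct Rayleigh-quotient/eigenvalue classification of $2$-homogeneous solutions of the signless thin problem \eqref{eq:(n-1)-hom-Alm-blowup} on $\Ss^{n-1}$, or (B) a rigidity argument exploiting that $q$ is not an arbitrary abstract solution but the limit of rescalings of $w=u-p_*$, and so inherits the Monneau-type inequality \eqref{eq:auxi-Monneau} against the entire cone $\cP^+$; the latter might be strong enough to force $q\ge 0$ on $L$ without developing a full signless thin obstacle theory.

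\textbf{Step (ii).} Granted Step~(i), part (c) is immediate: Lemma~\ref{lem:S-class} applied with $\la=2+\al_0$, $l=2$, $\be=\al_0$ gives that $\Sigma_{n-1}=S_{n-1,2+\al_0}$ is locally contained in a $C^{1,\al_0}$ $(n-1)$-dimensional manifold. For (a), in $n=2$ the strengthened gap $\la_*\ge 5/2$ would render $\Sigma_{1,a}$ empty (by definition of the anomalous stratum), so $\Sigma_1=\Sigma_{1,g}$, which Theorem~\ref{thm:main}(a) already places in a $C^{1,1/2}$ curve.

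\textbf{Step (iii).} Part (b) is a dimension-reduction argument modelled on \cite[Section~3]{FigSer19}, already used here in Lemmas~\ref{lem:anom-1-discrete} and~\ref{lem:anom-dim}. Using uniqueness of blowups at singular points \cite{CheFenLi22}, Lemma~\ref{lem:y_infty} to transport frequencies to nearby singular points, and Lemmas~\ref{lem:blowup-ineq}--\ref{lem:2-hom-har-poly} to classify the admissible $2$-homogeneous blowups of $\tilde w_r$ with frequency in $[2+\al_0,5/2)$, a Federer counting argument yields that $\Sigma_{n-1,a}$ is discrete when $n=3$ and satisfies $\dim_{\cH}(\Sigma_{n-1,a})\le n-3$ when $n\ge 4$. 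The \emph{main obstacle}, to reiterate, is Step~(i): the entire program reduces to establishing a quantitative regularity/frequency theory for the signless thin obstacle problem flagged as an open problem in Section~\ref{subsec:open-prob}, and absent that input none of (a)--(c) can presently be closed.
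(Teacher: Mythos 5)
Your overall plan coincides with the paper's own conditional treatment: the paper likewise isolates the frequency gap $\lambda_{*,x^0}\ge 2+\alpha_0$ on $\Sigma_{n-1}$ as the missing input (recorded as assumption \eqref{eq:assump}) and then derives (a)--(c) conditionally. Your identification of Step (i) as the genuine bottleneck, and the observation that the Almgren blowup $q$ is not a priori a Signorini solution because the sign constraint on $L$ is lost, is exactly Remark~\ref{rem:n-1}.

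Where the proposal goes astray is in the details of Step (iii). You invoke Lemma~\ref{lem:y_infty}, but that lemma is stated only for $m\le n-2$; the $m=n-1$ transport of frequency to accumulation points requires a separate statement, supplied in the paper as Lemma~\ref{lem:Alm-blowup-accumu-(n-1)}, where one also must argue via the even part $q^{\text{even}}$ (or use that $\lambda_*$ is non-integer). More substantively, the classification that feeds the Federer reduction is not the Monneau-type Lemmas~\ref{lem:blowup-ineq}--\ref{lem:2-hom-har-poly} (those only address $2$-homogeneous blowups), but a Focardi--Spadaro-style classification of Almgren frequencies for the \emph{signless} thin problem \eqref{eq:Alm-blowup-nosign}: Lemma~\ref{lem:2-dim-sol-hom}, showing two-dimensional homogeneous solutions have frequency in $\{k/2:k\in\N\}$, and Theorem~\ref{thm:(n-1)-hom-Alm-freq}, the $(n\ge4)$ dimension-reduction input. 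These enable Lemmas~\ref{lem:anom-discrete-(n=3)} and~\ref{lem:anom-(n-1)-dim}, which together constitute the paper's conditional proof of (b). Likewise, your derivation of (a) skips the mechanism by which $\lambda_*\ge 5/2$ arises: granting only \eqref{eq:assump}, the paper obtains it via Lemma~\ref{lem:classi-hom-(n=2)}, i.e.\ Lemma~\ref{lem:2-dim-sol-hom} combined with $\lambda_*>2$; Lemma~\ref{lem:2-dim-sol-hom} by itself permits $\lambda_*=2$, and Remark~\ref{rem:hom-classi} with the Kinderlehrer--Nirenberg cusp examples shows that $\lambda_*=5/2$ actually occurs in the no-sign setting, unlike the classical problem where the minimum above $2$ is $3$. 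Finally, your suggested route (B) for Step (i) --- forcing $q\ge0$ on $L$ from the Monneau inequality \eqref{eq:auxi-Monneau} --- is not pursued in the paper and remains speculative; the paper instead develops the signless thin classification and explicitly leaves \eqref{eq:assump} open.
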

An analogue of Conjecture~\ref{conj:n-1} was obtained in \cite{FigSer19} for the classical obstacle problem. A key to proving this result was showing that for any point $x^0\in \Sigma_{n-1}$, $\la_{*,x^0}\ge 2+\al_0$ for some dimensional constant $\al_0\in (0,1)$. However, as mentioned in Remark~\ref{rem:n-1}, a different approach seems necessary to derive the lower bound for $\la_{*,x^0}$ in our case. Once this bound is established, we can follow the approach in \cite{FigSer19} to achieve Conjecture~\ref{conj:n-1}.

Below, we prove Conjecture~\ref{conj:n-1} under the assumption 
\begin{align}
    \label{eq:assump}
    \la_{*,x^0}\ge2+\al_0,\quad x^0\in\Sigma_{n-1}.
\end{align}
Note that if \eqref{eq:assump} holds, then $\Sigma_{n-1}=S_{n-1,2+\al_0}$, thus Conjecture~\ref{conj:n-1} (c) follows from Lemma~\ref{lem:S-class}. To treat (a) and (b) in the conjecture, we use the following result in \cite{FocSpa18} regarding the thin obstacle problem:

\begin{proposition}
    \label{prop:sig-2-dim-hom-sol}
Let $v:\R^2\to\R$ be $\la$-homogeneous and even-symmetric in $x_2$, and assume that $v$ is a weak solution to 
\begin{align}\label{eq:har}
\Delta v=0\quad\text{in }\R^2\setminus(\{x_2=0\}\cap\{v=0\}).
\end{align}
Then, one of the following holds:
\begin{enumerate}[(i)]
    \item $\{x_2=0\}\cap \{v=0\}=\{x_1=x_2=0\}$, $\la\in \mathbb{N}$ and $v$ is a multiple of $\Phi_m$;
    \item $\{x_2=0\}\cap \{v=0\}=\{x_1\le0,x_2=0\}$ ($\{x_2=0\}\cap\{v=0\}=\{x_1\ge0,x_2=0\}$, respectively), $\la=m+1/2$ for some $m\in \mathbb{N}\cup\{0\}$ and $v$ is a multiple of $\Psi_m$ ($\Psi_m(-x_1,x_2)$, respectively);
    \item $\{x_2=0\}\cap \{v=0\}=\{x_2=0\}$, $\la=m+1$ for some $m\in \mathbb{N}\cup\{0\}$ and $v$ is a multiple of $\Pi_m$.
\end{enumerate}
In addition, if $v$ is a solution of the thin obstacle problem (i.e., $v$ satisfies \eqref{eq:har}, $v\ge0$ on $\{x_2=0\}$ and $\Delta v\le0$ in $\R^2$), then $m$ is even in $(i)$ and $(iii)$, and $m$ is odd in $(ii)$.
\end{proposition}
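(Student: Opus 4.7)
The plan is to reduce the problem to an ODE on the interval $(0,\pi)$ via polar decomposition, and then split into the three cases according to the geometry of $Z:=\{x_2=0\}\cap\{v=0\}$. On the open upper half-plane, $v$ is harmonic and $\la$-homogeneous, so writing $v(r,\theta)=r^\la f(\theta)$ for $\theta\in(0,\pi)$, the Laplace equation reduces to
\begin{equation*}
    f''(\theta)+\la^2 f(\theta)=0,\qquad \theta\in(0,\pi),
\end{equation*}
whence $f(\theta)=A\cos(\la\theta)+B\sin(\la\theta)$ for some $A,B\in\R$. The evenness $v(r,-\theta)=v(r,\theta)$ allows one to recover $v$ on the lower half-plane from $f$, so the entire solution is encoded by the triple $(A,B,\la)$. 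Whether each of the rays $\theta=0$ and $\theta=\pi$ lies in $Z$ or in the harmonic region dictates the boundary conditions on $f$ at the corresponding endpoint: membership in the harmonic region, together with $C^1$-smoothness and evenness of $v$ across the ray, forces $f'=0$ there, while membership in $Z$ forces $f=0$ there.

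I would then treat the three cases in order. In case (i), $v$ is harmonic on all of $\R^2\setminus\{0\}$, so $r^\la f(\theta)$ must extend to a single-valued $\la$-homogeneous harmonic function on the punctured plane; this forces $\la\in\N$, and the even-in-$x_2$ condition kills the sine term, giving $v=A r^\la\cos(\la\theta)$, a multiple of $\Phi_\la$. In case (ii), the boundary conditions $f'(0)=0$ and $f(\pi)=0$ give $B=0$ and $A\cos(\la\pi)=0$; nontriviality forces $\la=m+\tfrac12$ for some $m\in\N\cup\{0\}$, and $v$ is a multiple of $\Psi_m$. The subcase $Z=\{x_1\ge0,x_2=0\}$ is symmetric under the reflection $x_1\mapsto -x_1$. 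In case (iii), $f(0)=f(\pi)=0$ forces $A=0$ and $\sin(\la\pi)=0$, so $\la=m+1$ for some $m\in\N\cup\{0\}$ and $v$ is a multiple of $\Pi_m$.

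For the additional conclusions under the thin obstacle hypotheses, the key observation is that $\Delta v$ is a nonpositive distribution supported on $\{x_2=0\}$, which, by evenness in $x_2$, is given by twice the one-sided normal derivative $\partial_{x_2}v(\cdot,0^+)$ against the surface measure on that line. Using $\partial_{x_2}=\sin\theta\,\partial_r+\tfrac{\cos\theta}{r}\partial_\theta$ one computes the relevant jumps directly. In case (i), $v$ is classically harmonic, so only the sign condition $v\ge0$ on $\{x_2=0\}$ matters; evaluating at $\theta=0$ and $\theta=\pi$ gives $A\ge0$ and $A(-1)^m\ge0$, forcing $m$ even. In case (ii), the jump at $\theta=\pi$ is a positive multiple of $A(-1)^m r^{m-1/2}$, and combining $A\ge0$ with nonpositivity of this jump yields $m$ odd. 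In case (iii), the jumps at $\theta=0$ and $\theta=\pi$ are positive multiples of $B$ and $-B(-1)^{m+1}$ respectively, so nonpositivity of both forces $B\le0$ and $(-1)^{m+1}\le0$, i.e., $m$ even. The main technical point is the careful sign bookkeeping in the jump computations of cases (ii) and (iii); everything else is a transparent separation-of-variables exercise in two dimensions.
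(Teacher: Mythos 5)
The paper does not actually prove Proposition~\ref{prop:sig-2-dim-hom-sol}: it is stated as a citation from Focardi--Spadaro \cite{FocSpa18} (the text before it says ``we use the following result in \cite{FocSpa18}'', and after it refers to ``equations (A.2)--(A.4) in \cite{FocSpa18}'' for the explicit formulas for $\Phi_m$, $\Psi_m$, $\Pi_m$). So there is no in-paper proof to compare against; your proposal supplies a self-contained derivation.

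Your separation-of-variables argument is correct and is exactly the natural two-dimensional approach (and, as far as I can tell, the same one used in the appendix of \cite{FocSpa18}). The few points worth making explicit to turn your sketch into a complete proof: (1) since $v$ is $\la$-homogeneous, $Z:=\{x_2=0\}\cap\{v=0\}$ is a closed homogeneous subset of the line containing the origin, so it is one of $\{0\}$, a closed half-line, or the whole line, which is precisely what underlies the ``three cases according to the geometry of $Z$''; (2) at each endpoint $\theta\in\{0,\pi\}$ exactly one of the Dirichlet/Neumann conditions is forced for a nontrivial solution (since $f(0)=A$, $f'(0)=B\la$, and both vanishing would give $v\equiv 0$), so the four combinations $f'(0)=f'(\pi)=0$, $f'(0)=f(\pi)=0$, $f(0)=f'(\pi)=0$, $f(0)=f(\pi)=0$ match up bijectively with cases (i), (ii), its reflection, and (iii); (3) in the jump computation it is worth stating that, by evenness, the distributional Laplacian equals $2\,\partial_{x_2}v(\cdot,0^+)\,\mathcal{H}^1\msr\{x_2=0\}$, and that $\sin((m+1/2)\pi)=(-1)^m$, $\cos((m+1)\pi)=(-1)^{m+1}$ are the identities behind your sign bookkeeping. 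I have verified all the signs in your cases (ii) and (iii), and they do produce $m$ odd and $m$ even respectively, consistent with the statement. In short: the approach is sound and the conclusions are right; the proposal is a valid proof of the cited result, it just needs the small bookkeeping of the case enumeration and jump formula spelled out.
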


In the above proposition, $\Phi_m$, $\Psi_m$ and $\Pi_m$ are $m$-homogeneous solutions of \eqref{eq:har} (see equations (A.2)-(A.4) in \cite{FocSpa18}) whose explicit expressions are not our interest.

As pointed out in \cite{FigSer19}, Proposition~\ref{prop:sig-2-dim-hom-sol} implies that if $v$ is an even-symmetric homogeneous solution to the thin obstacle problem in $\R^2$, then its homogeneity belongs to the set
$$
\{1,2,3,4,\cdots\}\cup\{3/2,7/2,11/2,15/2,\cdots\}.
$$

Let $q:\R^2\to\R$ be an even-symmetric homogeneous solution of 
\begin{align}
    \label{eq:Alm-blowup-nosign}
    \begin{cases}
        \Delta q\le0,\,\,\, q\Delta q=0\quad\text{in }\R^n,\\
        \Delta q=0\quad\text{in }\R^n\setminus L,
    \end{cases}
\end{align}
where $L$ is a line. Note that $q$ can be seen as a solution of the thin obstacle problem without the sign condition on $L$. One can easily check from the proof of Proposition~\ref{prop:sig-2-dim-hom-sol} that the sign condition on $v$ is used only to obtain that $m$ is even in $(i)$ and $m$ is odd in $(ii)$. This observation gives the following lemma concerning $q$.

\begin{lemma}
    \label{lem:2-dim-sol-hom}
Let $q$ be as in \eqref{eq:Alm-blowup-nosign}. Then its possible homogeneity is contained in the set 
$$
\{k/2\,:\, k\in \mathbb{N}\}.
$$
\end{lemma}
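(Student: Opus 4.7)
The plan is to derive Lemma~\ref{lem:2-dim-sol-hom} as an immediate corollary of Proposition~\ref{prop:sig-2-dim-hom-sol}, exploiting the observation made in the paragraph preceding the lemma: the sign condition $v \ge 0$ on $\{x_2 = 0\}$ in that proposition is used only to pin down the parity of the integer $m$ appearing in cases $(i)$ and $(ii)$, not to produce the admissible list of homogeneities in the first place.

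The first step is to verify that $q$ meets the hypotheses of Proposition~\ref{prop:sig-2-dim-hom-sol}. After a rotation I may assume $L = \{x_2 = 0\}$, so the even symmetry of $q$ in $x_2$ matches the evenness requirement, and homogeneity is assumed. To check that $q$ solves \eqref{eq:har}, namely that $\Delta q = 0$ weakly on $\R^2 \setminus (\{x_2 = 0\} \cap \{q = 0\})$, I combine the two pieces of \eqref{eq:Alm-blowup-nosign}: on $\R^2 \setminus L$ harmonicity is given outright, and at any point of $L$ where $q \neq 0$ continuity yields $q \neq 0$ on a neighborhood, so $q \Delta q = 0$ forces $\Delta q = 0$ there. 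Standard removal-of-singularities arguments then promote this to weak harmonicity across $L \setminus \{q = 0\}$, which is precisely \eqref{eq:har}.

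Applying Proposition~\ref{prop:sig-2-dim-hom-sol} to $q$ then places the homogeneity $\lambda$ into one of three lists, namely
\begin{align*}
\lambda \in \mathbb{N}\ \text{in }(i), \qquad \lambda \in \{m + \tfrac{1}{2} : m \in \mathbb{N} \cup \{0\}\}\ \text{in }(ii), \qquad \lambda \in \{m + 1 : m \in \mathbb{N} \cup \{0\}\}\ \text{in }(iii).
\end{align*}
The union of these three sets is exactly $\{k/2 : k \in \mathbb{N}\}$, which is the claimed conclusion. The ``In addition'' clause of Proposition~\ref{prop:sig-2-dim-hom-sol} would further force $m$ to be even in $(i)$ and $(iii)$ and odd in $(ii)$, but that clause requires $q \ge 0$ on $L$, a hypothesis that is precisely the one we are dropping.

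There is no serious obstacle: the entire content of the lemma is a bookkeeping step on top of Proposition~\ref{prop:sig-2-dim-hom-sol}. The only mildly non-cosmetic point is the weak harmonicity of $q$ across the punctured line $L \setminus \{q = 0\}$, and that follows cleanly from $q \Delta q = 0$ together with local non-vanishing of $q$ by continuity.
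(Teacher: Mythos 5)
Your proof is correct and follows essentially the same route as the paper: both arguments reduce Lemma~\ref{lem:2-dim-sol-hom} to the classification in Proposition~\ref{prop:sig-2-dim-hom-sol} without the sign condition, and both arrive at the set $\{k/2 : k\in\mathbb{N}\}$ as the union of the admissible homogeneities in cases $(i)$--$(iii)$. The only stylistic difference is that you invoke the proposition's statement directly (noting that the ``In addition'' clause is the only place the sign condition enters), whereas the paper phrases this as inspecting the proof of Proposition~\ref{prop:sig-2-dim-hom-sol} to check where $v\ge 0$ is used. Your verification that $q$ satisfies \eqref{eq:har} is correct but slightly overcomplicated: since $q\Delta q=0$ already says $\Delta q$ vanishes on the open set $\{q\neq 0\}$, combining this with $\Delta q=0$ off $L$ gives harmonicity on $\R^2\setminus(L\cap\{q=0\})$ directly, and no separate removal-of-singularities step is needed.
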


Lemma~\ref{lem:2-dim-sol-hom} readily gives the following classification of possible Almgren frequencies when the dimension $n=2$.

\begin{lemma}\label{lem:classi-hom-(n=2)}
Let $n=2$ and $0\in\Sigma_1$. If \eqref{eq:assump} holds, then $\la_*=\Phi(0+,u-p_*)$ belongs to the set 
$$
\{k/2\,:\, k=5,6,7,8,\cdots\}.
$$
\end{lemma}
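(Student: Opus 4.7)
The plan is direct: extract a two-dimensional homogeneous blow-up $q$ of $\tilde w_r$, apply Lemma~\ref{lem:2-dim-sol-hom} to restrict the admissible homogeneities, and then discard the small ones using the standing hypothesis~\eqref{eq:assump}.

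First, I would choose coordinates so that $L=\{x_2=0\}$. Since $0\in\Sigma_1$ and $n=2$, the Hessian $D^2p_*$ has a one-dimensional kernel equal to $L$ and trace one, whence $p_*(x)=\tfrac12 x_2^2$. Next, I would invoke the top-stratum analogue of Proposition~\ref{prop:blow-up-struc} described in Remark~\ref{rem:n-1}: along some subsequence $r_{k_l}\searrow 0$, $\tilde w_{r_{k_l}}$ converges weakly in $W^{1,2}(B_1)$ to a nontrivial $\la_*$-homogeneous function $q$ solving the system~\eqref{eq:(n-1)-hom-Alm-blowup}; in particular $q$ satisfies \eqref{eq:Alm-blowup-nosign} with $L=\{x_2=0\}$.

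Lemma~\ref{lem:2-dim-sol-hom} applied to this $q$ then forces $\la_*\in\{k/2:k\in\mathbb{N}\}$. The standing assumption~\eqref{eq:assump} gives $\la_*\ge 2+\al_0>2$, so $\la_*$ must be a half-integer strictly greater than $2$, hence at least $5/2$. Combining, $\la_*\in\{k/2:k=5,6,7,\ldots\}$, which is the claim.

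I expect the only delicate input to be the top-stratum blow-up classification itself (the $m=n-1$ case of Proposition~\ref{prop:blow-up-struc}): unlike in the classical obstacle problem, the blow-up $q$ here need not satisfy the sign condition $q\ge 0$ on $L$, so $q$ is not a thin-obstacle solution. Lemma~\ref{lem:2-dim-sol-hom} was stated precisely to cover this ``no-sign'' two-dimensional case, so once the blow-up is in hand the rest is a routine numerical check using~\eqref{eq:assump}.
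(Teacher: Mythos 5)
Your plan matches the paper's up to the final step, but there is a genuine (though small) gap in how you invoke Lemma~\ref{lem:2-dim-sol-hom}. That lemma is stated for \emph{even-symmetric} homogeneous solutions of \eqref{eq:Alm-blowup-nosign}: the phrase ``Let $q$ be as in \eqref{eq:Alm-blowup-nosign}'' refers to the preceding paragraph, where $q$ is introduced as an even-symmetric homogeneous solution, and the classification it inherits from Proposition~\ref{prop:sig-2-dim-hom-sol} (the Focardi--Spadaro result) is proved under the even-symmetry hypothesis. The Almgren blow-up $q$ of $u-p_*$, however, has no reason to be even-symmetric with respect to $L$, so you cannot apply Lemma~\ref{lem:2-dim-sol-hom} to it directly.

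The paper closes this gap by first decomposing $q=q_{even}+q_{odd}$ with respect to reflection across $L$, both parts being $\la_*$-homogeneous. Since $q$ is harmonic off $L$ and $q_{odd}$ vanishes on $L$, $q_{odd}$ extends harmonically across $L$ and is therefore a harmonic polynomial, forcing its homogeneity into $\{1,2,3,\dots\}$. The even part $q_{even}$ \emph{is} an even-symmetric solution of \eqref{eq:Alm-blowup-nosign}, so Lemma~\ref{lem:2-dim-sol-hom} applies to it and gives homogeneity in $\{k/2:k\in\mathbb{N}\}$. Since $q\not\equiv0$, at least one piece is nontrivial, and either way $\la_*\in\{k/2:k\in\mathbb{N}\}$; intersecting with $\la_*\ge 2+\alpha_0>2$ from \eqref{eq:assump} then gives the claimed set. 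Your ``routine numerical check'' at the end is correct; the missing step is the even/odd decomposition that legitimizes the use of Lemma~\ref{lem:2-dim-sol-hom}.
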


\begin{proof}
By following the argument in \cite[Proposition~2.10]{FigSer19}, we can obtain that every blowup $q$ of $u-p_*$ is a $\la_*$-homogeneous solution to \eqref{eq:Alm-blowup-nosign} with $\la_*\ge2+\al_0>2$. We decompose $q$ into the sum of even and odd functions with respect to $L$
$$
q=q_{even}+q_{odd}.
$$
Note that $q_{even}$ and $q_{odd}$ are homogeneous of degree $\la_*$. As $q$ is harmonic in $\R^2\setminus L$, one can easily show that $q_{odd}$ is harmonic in $\R^2$. Thus, its homogeneity is contained in the set 
$$
\{1,2,3,4,\cdots\}.
$$
On the other hand, since $q_{even}$ is an even-symmetric solution to \eqref{eq:Alm-blowup-nosign}, we have by Lemma~\ref{lem:2-dim-sol-hom} that the possible homogeneity of $q_{even}$ belongs to the set 
$$
\{k/2\,:\, k=1,2,3,4,5,\cdots\}.
$$
Therefore,
$$
\la_*\in \{k/2\,:\, k=1,2,3,4,5,\cdots\},
$$
which combined with $\la_*>2$ concludes the lemma.    
\end{proof}

\begin{remark}\label{rem:hom-classi}
Lemma~\ref{lem:classi-hom-(n=2)} is an analogue of \cite[Lemma~3.1]{FigSer19} for the classical obstacle problem, where the authors proved that $\la_*=\Phi(0+,u-p_*)$ belongs to
$$
\{3,4,5,6,\cdots\}\cup\{7/2, 11/2, 15/2, 19/2,\cdots\}.
$$
Lemma~\ref{lem:classi-hom-(n=2)} says that in our no-sign case $\la_*$  can be contained in a larger set including additional values
$$
5/2,9/2,13/2,17/2,\cdots.
$$
Now we show that this values are not redundant by providing an explicit example.

We consider solutions of \eqref{eq:no-sign-obst} defined locally near the origin with polar coordinates by
$$
u(x)=\frac12x_2^2-\frac1{1+\mu/2}r^{1+\mu/2}\sin(1+\mu/2)\theta+\cdots,\quad \mu=4k+3\, (k=0,1,2,3,\cdots),
$$
see Equation~5.3 in \cite{KinNir77} (also \cite{CafSha04}). From $1+\mu/2>2$, the $2$-homogeneous blowup of $u$ at $0$ should be a polymial $p_*(x)=\frac12x_2^2$. One can also see that the cusp appears near the free boundary $0\in \Gamma$, and thus $0\in \Sigma^+_1$. A direct computation shows 
$$
\tilde w_r(x)\approx -r^{1+\mu/2}\sin(1+\mu/2)\theta+\cdots.
$$
Therefore, $\tilde w_r(x)$ converges to a harmonic function $q(x)=-Cr^{1+\mu/2}\sin(1+\mu/2)\theta$ for some constant $C>0$, and hence 
$$
\Phi(0+,u-p_*)=\Phi(q,1)=1+\mu/2\in\{5/2,9/2,13/2,17/2,\cdots\}.
$$
\end{remark}

Now we are ready to prove Conjecture~\ref{conj:n-1} (a) when \eqref{eq:assump} holds.

\begin{proof}[Proof of Conjecture~\ref{conj:n-1} (a)]
By applying Lemma~\ref{lem:classi-hom-(n=2)}, we can obtain $\Sigma_1=S_{1,5/2}$. Thus, Conjecture~\ref{conj:n-1} $(a)$ follows by applying Lemma~\ref{lem:S-class} with $\la=5/2$.
\end{proof}

To obtain Conjecture~\ref{conj:n-1} (b), we consider the possible Almgren frequencies when $n\ge3$.

\begin{theorem}
    \label{thm:(n-1)-hom-Alm-freq}
Let $q$ be a solution of \eqref{eq:Alm-blowup-nosign} in $B_1$. Under the assumption \eqref{eq:assump}, for all $z$ in the coincidence set $\{q=0\}\subset L$, it holds
$$
\Phi(0+,q(z+\cdot))\in \{k/2\,:\, k=2,3,4,5,\cdots\}
$$
except for a set of Hausdorff dimension at most $n-3$.    
\end{theorem}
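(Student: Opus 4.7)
The plan is a Federer-type dimension reduction that reduces the classification of Almgren frequencies at coincidence points of $q$ to the planar classification in Lemma~\ref{lem:2-dim-sol-hom}. To set up blow-ups and frequencies: for $z\in\{q=0\}\cap L$, a variant of Lemma~\ref{lem:mon-formula}(1) applied to $q(z+\cdot)$ (which still satisfies $q(z+\cdot)\Delta q(z+\cdot)=0$ and is harmonic off $L$, since $z\in L$) shows that $r\mapsto\Phi(r,q(z+\cdot))$ is nondecreasing. Thus $\lambda_z:=\Phi(0+,q(z+\cdot))$ is well-defined, and any subsequential Almgren rescaling $\tilde q_z$ of $q(z+\cdot)$ is a non-trivial $\lambda_z$-homogeneous solution of \eqref{eq:Alm-blowup-nosign} on $\R^n$ with the same hyperplane $L$.

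Next, I stratify $\{q=0\}\cap L$ by spine dimension. Set
\[
S(\tilde q_z):=\{y\in\R^n\,:\,\tilde q_z(\cdot+y)\equiv\tilde q_z(\cdot)\};
\]
homogeneity forces $\tilde q_z\equiv 0$ on $S(\tilde q_z)$, so $S(\tilde q_z)\subset L$. Let $\cS^j:=\{z\in\{q=0\}\cap L:\dim S(\tilde q_z)\le j\text{ for some blow-up}\}$. Using compactness of the normalized class of $\lambda$-homogeneous solutions of \eqref{eq:Alm-blowup-nosign} with frequency in a bounded range (inherited from Almgren monotonicity) together with upper-semicontinuity of $z\mapsto\lambda_z$, the classical Federer argument yields $\dim_{\cH}\cS^{n-3}\le n-3$.

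For $z$ off $\cS^{n-3}$, some blow-up $\tilde q_z$ has spine of dimension $n-2$ or $n-1$. Rotating coordinates so that the spine contains $\R^{n-2}\times\{0\}^2$ and $L=\R^{n-1}\times\{0\}$, $\tilde q_z$ depends only on $(x_{n-1},x_n)$ and gives a $\lambda_z$-homogeneous planar solution $v$ of the 2D version of \eqref{eq:Alm-blowup-nosign}. Decomposing $v=v_e+v_o$ into its parts even and odd in $x_n$: $v_o$ is a harmonic polynomial (frequency in $\N$), while $v_e$ is an even-symmetric solution to which Lemma~\ref{lem:2-dim-sol-hom} applies, giving frequency in $\{k/2:k\in\N\}$. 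In either case $\lambda_z\in\{k/2:k\in\N\}$. The lower bound $\lambda_z\ge 1$ (i.e.\ $k\ge 2$) is obtained by noting that the $\lambda=1/2$ planar solutions (case (ii) of Proposition~\ref{prop:sig-2-dim-hom-sol}) have coincidence set equal to a \emph{half}-line; after lifting by the $(n-2)$-dimensional spine, the corresponding $z$ must lie on the relative boundary of $\{q=0\}\cap L$, and these boundary points form a set of Hausdorff dimension at most $n-3$ that can be absorbed into the exceptional set.

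The technical heart of the argument is the Federer reduction in the absence of a sign condition on $L$: classically, sign/minimality yields the closedness of the class of blow-ups with a prescribed spine, which here must be derived purely from the Almgren monotonicity of Lemma~\ref{lem:mon-formula} together with the planar classification above. A second delicate point is the absorption of the $\lambda_z=1/2$ points into the exceptional $(n-3)$-dimensional set; the geometric argument sketched above (coincidence set being only a half-hyperplane after lifting) is plausible but must be made rigorous by carefully tracking how the planar coincidence-set geometry lifts through the spine in the full $n$-dimensional picture.
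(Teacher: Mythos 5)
Your overall strategy (Federer dimension reduction stratified by spine dimension, reducing to the two-dimensional classification of Lemma~\ref{lem:2-dim-sol-hom}) matches the paper, which proves the theorem simply by invoking the dimension-reduction argument of Focardi--Spadaro \cite[Lemma~5.3 and Section~8.1]{FocSpa18} with the enlarged set of planar frequencies that results from dropping the sign condition. Your setup of the frequencies via Lemma~\ref{lem:mon-formula}, the compactness and upper semicontinuity of $z\mapsto\lambda_z$, the claim $\dim_{\cH}\cS^{n-3}\le n-3$, and the reduction of $(n-2)$-spine blow-ups to planar even/odd pieces are all consistent with that cited argument.

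The gap is in your exclusion of $\lambda_z=1/2$. You observe that a $\Psi_0$-type planar blow-up has coincidence set a half-line, hence after lifting a half-hyperplane, so that $z$ lies on the relative boundary of $\{q=0\}\cap L$, and then assert that such boundary points form a set of Hausdorff dimension at most $n-3$. That assertion is false: the relative boundary of $\{q=0\}$ inside the $(n-1)$-dimensional hyperplane $L$ is generically $(n-2)$-dimensional (in the classical thin obstacle problem it is precisely the free boundary, whose regular stratum is an $(n-2)$-manifold). Moreover, the admissible frequency $\lambda_z=3/2$ (the $\Psi_1$ case, $k=3$) also occurs exactly at points whose lifted blow-up coincidence set is a half-hyperplane, so ``$z$ lies on the relative boundary'' cannot possibly discriminate between the forbidden $k=1$ and the allowed $k=3$. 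Ruling out $k=1$ would require a genuinely different mechanism (a nondegeneracy or regularity input special to Almgren blow-ups of $u-p_*$, or a further dimension reduction within the $\Psi_0$ stratum). You flag the step as needing rigor, but as written it does not close; as a practical matter, for the downstream applications in Lemmas~\ref{lem:anom-discrete-(n=3)} and~\ref{lem:anom-(n-1)-dim} only the window $(2+\al_0,5/2)$ is ever used, so the $k\ge1$ versus $k\ge2$ distinction is immaterial there.
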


\begin{proof}
Theorem~\ref{thm:(n-1)-hom-Alm-freq} is a part of \cite[Theorem~1.3]{FocSpa18} concerning the thin obstacle problem. With Lemma~\ref{lem:2-dim-sol-hom} at hand, we can obtain the theorem by following the argument in \cite{FocSpa18}, in particular Lemma~5.3 and Section~8.1 therein, with larger set of possible frequencies $\{k/2\,:\, k=2,3,4,5,\cdots\}$.    
\end{proof}

\begin{lemma}\label{lem:Alm-blowup-accumu-(n-1)}
For $n\ge3$, let $0\in \Sigma_{n-1}$ with $\la_*\ge 2+\al_0$. Suppose that there exist a sequence of points $x_k\in \Sigma_{n-1}$ converging to $0$ and a sequence of radii $r_k\searrow0$ with $|x_k|\le r_k/2$ such that
\begin{align*}
    &\tilde w_{r_k}=\frac{(u-p_*)(r_k\cdot)}{\|(u-p_*)(r_k\cdot)\|_{L^2(\partial B_1)}}\to q\quad\text{weakly in }W^{1,2}(B_1),\\
    &y_k:=\frac{x_k}{r_k}\to y_\infty.
\end{align*}
Let $e\in S^{n-1}\cap L^\perp$, and denote by $q^{\text{even}}$ and $q^{\text{odd}}$ the even odd parts of $q$ with respect to $L$, respectively, i.e.,
\begin{align*}
    q^{\text{even}}(x)=\frac{q(x)+q(x-2(e\cdot x)e)}2,\quad q^{\text{odd}}(x)=\frac{q(x)-q(x-2(e\cdot x)e)}2.
\end{align*}
Then $y_\infty\in L$, and for $\la:=\inf_k\{\la_{*,x_k}\}\ge2+\al_0$
$$
\frac1{\rho^{2\la}}\dashint_{\partial B_\rho} q^{\text{even}}(y_\infty+x)^2\le 2^{2\la}\dashint_{\partial B_{1/2}}q^{\text{even}}(y_\infty+x)^2,\quad 0<\rho<1/2.
$$
Moreover, if $\la_*$ is not an integer, then 
$$
\frac1{\rho^{2\la}}\dashint_{\partial B_\rho} q(y_\infty+x)^2\le 2^{2\la}\dashint_{\partial B_{1/2}}q(y_\infty+x)^2,\quad 0<\rho<1/2.
$$
\end{lemma}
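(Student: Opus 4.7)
To prove this lemma, I would follow the strategy of \cite[Lemma~3.5]{FigSer19}, adapted to the no-sign setting using the monotonicity formulas of Lemma~\ref{lem:mon-formula}. The inclusion $y_\infty \in L$ follows from the Almgren and Monneau argument in Lemma~\ref{lem:y_infty}, which only uses $p_{*,x_k} \in \cP^+$; this is satisfied because $\Sigma_{n-1}\subset \Sigma^+$.

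Next, for each $k$, apply Lemma~\ref{lem:mon-formula}(3) at $x_k$ with exponent $\lambda \le \lambda_{*,x_k}$ to the function $u(x_k+\cdot) - p_{*,x_k}$. This gives, for $0 < \rho \le R < 1 - |x_k|$,
\[\rho^{-(n-1+2\lambda)}\int_{\partial B_\rho}\bigl(u(x_k+\cdot)-p_{*,x_k}\bigr)^2\le R^{-(n-1+2\lambda)}\int_{\partial B_R}\bigl(u(x_k+\cdot)-p_{*,x_k}\bigr)^2.\]
Setting $R = r_k/2$, $\rho = \sigma r_k$ with $\sigma \in (0,1/2)$, and changing variables $x = r_k y$ using $u(x_k+r_ky)=\|w_{r_k}\|_{L^2(\partial B_1)}\tilde w_{r_k}(y_k+y) + r_k^2 p_*(y_k+y)$, the inequality rewrites as
\[\frac{1}{\sigma^{2\lambda}}\dashint_{\partial B_\sigma}\bar g_k^2\,dy \le 2^{2\lambda}\dashint_{\partial B_{1/2}}\bar g_k^2\,dy, \quad 0 < \sigma < 1/2,\]
where $\bar g_k(y) := \tilde w_{r_k}(y_k+y) + s_k^{-1}P_k(y)$, with $s_k := \|w_{r_k}\|_{L^2(\partial B_1)}/r_k^2$ and $P_k(y) := p_*(y_k+y)-p_{*,x_k}(y)$ a polynomial of degree at most~$2$.

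I then take the even part in direction $e$ and pass to the limit $k\to\infty$. Since $\tilde w_{r_k}\to q$ weakly in $W^{1,2}$, hence strongly in $L^2_{\loc}$ by Rellich, and $e\cdot y_\infty=0$ (from $y_\infty\in L$), we have $[\tilde w_{r_k}(y_k+\cdot)]^{\text{even}}\to q^{\text{even}}(y_\infty+\cdot)$ in $L^2_{\loc}$. Under assumption~\eqref{eq:assump}, the $C^{1,\alpha_0}$ structure of $\Sigma_{n-1}$ from Lemma~\ref{lem:S-class} (applied with $\la=2+\al_0$) furnishes the quantitative continuity of the blowup map $x\mapsto p_{*,x}$ at H\"older rate $\al_0$. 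Combined with $|x_k|\le r_k/2$ and the normalization $s_k\sim r_k^{\la_*-2}$, this shows $s_k^{-1}P_k^{\text{even}}$ is uniformly bounded and, along a subsequence, converges to a polynomial $P_\infty^{\text{even}}$ of degree at most~$2$.

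The main obstacle is to eliminate $P_\infty^{\text{even}}$ in the limiting inequality so as to recover the stated bound for $q^{\text{even}}(y_\infty+\cdot)$ alone. Passing to the limit via the orthogonal decomposition $\dashint\bar g_k^2=\dashint(\bar g_k^{\text{even}})^2+\dashint(\bar g_k^{\text{odd}})^2$ yields
\[\frac{1}{\sigma^{2\lambda}}\dashint_{\partial B_\sigma}\bigl(q^{\text{even}}(y_\infty+\cdot)+P_\infty^{\text{even}}\bigr)^2\le C<\infty,\quad \sigma\in(0,1/2).\]
Coupled with an Almgren lower bound $\Phi(\sigma,\bar g_k)=\Phi(\sigma r_k,u(x_k+\cdot)-p_{*,x_k})\ge\la_{*,x_k}\ge\la$ (which survives in the limit along a further subsequence), one concludes that $q^{\text{even}}(y_\infty+\cdot)$ must vanish to order $\lceil\la\rceil$ at the origin. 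Since $\la>2$ and $P_\infty^{\text{even}}$ has degree at most~$2$, the vanishing of all Taylor modes up to order~$2$ of the sum $q^{\text{even}}(y_\infty+\cdot)+P_\infty^{\text{even}}$ forces $P_\infty^{\text{even}}\equiv 0$ mode by mode; the first estimate then follows from Monneau's formula applied to $q^{\text{even}}(y_\infty+\cdot)$ with exponent~$\la$. The second assertion under the non-integer condition on $\la_*$ proceeds analogously: when $\la_*\notin\N$, the $\la_*$-homogeneous function $q$ has no polynomial components of integer degree $\le 2$, so the analogous argument applied to the full $\bar g_k\to q(y_\infty+\cdot)+P_\infty$ forces $P_\infty\equiv 0$ and yields the bound for $q(y_\infty+\cdot)$.
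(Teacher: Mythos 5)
The high-level outline is correct and matches Figalli--Serra's Lemma 3.3: apply Monneau at $x_k$ to $u(x_k+\cdot)-p_{*,x_k}$, change variables, and control the polynomial correction $s_k^{-1}P_k$ via the even/odd decomposition. However, the two steps where you dispose of the polynomial are not sound.

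\emph{Boundedness of $s_k^{-1}P_k^{\text{even}}$.} You invoke the $C^{1,\alpha_0}$ continuity of $x\mapsto p_{*,x}$ to get $|D^2p_*-D^2p_{*,x_k}|\lesssim|x_k|^{\alpha_0}$, hence $P_k^{\text{even}}\sim b_k^2\lesssim r_k^{2\alpha_0}$, and then claim $s_k\sim r_k^{\lambda_*-2}$. But the non-degeneracy $s_k\gtrsim r_k^{\lambda_*-2}$ is not justified by Monneau (only the upper bound follows from $H_{\lambda_*}$ being nondecreasing), and even granting it, $s_k^{-1}r_k^{2\alpha_0}\sim r_k^{2\alpha_0-(\lambda_*-2)}$ blows up whenever $\lambda_*>2+2\alpha_0$. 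The correct route is not through Lemma~\ref{lem:S-class} at all: boundedness of $s_k^{-1}P_k$ follows directly from the Monneau bound on $\bar g_k$, for if $s_k^{-1}P_k$ were unbounded the renormalized limit would be a nonzero degree-$\le 2$ polynomial $P$, and the inequality $\rho^{-2\lambda}\dashint_{\partial B_\rho}(P)^2\le C$ with $\lambda>2$ would force $P\equiv 0$.

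\emph{Elimination of $P_\infty^{\text{even}}$.} This is where the argument genuinely breaks. The Almgren/Monneau bound inherited by the limit applies to the whole function $\bar g_\infty=q(y_\infty+\cdot)+P_\infty$, so what you get is that $\bar g_\infty$ (and its even part) vanishes to order $\ge\lambda$ at $0$; this only yields $D^\alpha q^{\text{even}}(y_\infty)=-D^\alpha P_\infty^{\text{even}}(0)$ for $|\alpha|\le 2$, which determines $P_\infty^{\text{even}}$ in terms of the Taylor coefficients of $q$ at $y_\infty$, not that it vanishes. Your assertion that ``$q^{\text{even}}(y_\infty+\cdot)$ must vanish to order $\lceil\lambda\rceil$'' is exactly the conclusion one is after and does not follow from the Almgren bound on $\bar g_k$; using it to force $P_\infty^{\text{even}}\equiv 0$ is circular. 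What actually kills $P_\infty^{\text{even}}$ is the \emph{quadratic} (rather than linear) dependence of $P_k^{\text{even}}$ on the small parameters: writing $p_*=\tfrac12(x\cdot e)^2$, $p_{*,x_k}=\tfrac12(x\cdot e_k)^2$ with $e_k=a_k e+b_k f_k$, one computes $P_k^{\text{even}}=\tfrac12(y_k\cdot e)^2+\tfrac{b_k^2}{2}[(y\cdot e)^2-(y\cdot f_k)^2]$, whereas the odd part $P_k^{\text{odd}}=(y_k\cdot e)(y\cdot e)-a_kb_k(y\cdot e)(y\cdot f_k)$ is linear in $(y_k\cdot e)$ and $b_k$. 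Boundedness of $s_k^{-1}P_k^{\text{odd}}$ gives boundedness of $s_k^{-1}(y_k\cdot e)$ and $s_k^{-1}b_k$; together with $(y_k\cdot e)\to 0$ and $b_k\to 0$, this forces $s_k^{-1}(y_k\cdot e)^2\to 0$ and $s_k^{-1}b_k^2\to 0$, i.e.\ $s_k^{-1}P_k^{\text{even}}\to 0$. (The constant term can also be handled by the pointwise identity $(y_k\cdot e)^2/s_k=-2\tilde w_{r_k}(y_k)\to -2q(y_\infty)=0$, using $u(x_k)=0$.) This structural cancellation is precisely what your proposal is missing, and it is the reason the lemma takes even parts in the first place.

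Finally, note that for the ``moreover'' part you assert that when $\lambda_*\notin\mathbb{N}$ the $\lambda_*$-homogeneous $q$ ``has no polynomial components of integer degree $\le 2$''; this is not quite the relevant point (the polynomial $P_\infty$ is added to $q(y_\infty+\cdot)$, a translate of $q$, which is not homogeneous). The argument there should again be that $s_k^{-1}P_k^{\text{odd}}\to 0$, using that the odd part has a linear-in-$b_k$ quadratic term plus a linear-in-$(y_k\cdot e)$ linear term, and that the non-integer frequency rules out the residual $2$-homogeneous contribution.
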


\begin{proof}
The proof follows from Almgren- and Monneau monotonicity formulas applied to $u(x_k+\cdot)-p_{*,x_k}$ and the fact that $\Phi(0+,u(x_k+\cdot)-p_{*,x_k})\ge 2+\al_0$; see \cite[Lemma~3.3]{FigSer19}.
\end{proof}

\begin{lemma}\label{lem:anom-discrete-(n=3)}
For $n=3$, if \eqref{eq:assump} holds, then $\Sigma_{n-1,a}$ is a discrete set.
\end{lemma}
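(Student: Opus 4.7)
The plan is to argue by contradiction through a two-step blowup and a Federer-type dimension reduction; the gap in admissible frequencies supplied by Lemma~\ref{lem:2-dim-sol-hom} will then close the argument. Suppose $\Sigma_{n-1,a}$ is not discrete; after a translation we may take $0\in \Sigma_{n-1,a}$ to be an accumulation point and pick $x_k\in \Sigma_{n-1,a}$ with $0\ne x_k\to 0$. I would set $r_k:=2|x_k|$ and $y_k:=x_k/r_k\in \partial B_{1/2}$, and extract a subsequence along which $\tilde w_{r_k}\to q$ weakly in $W^{1,2}(B_1)$ and $y_k\to y_\infty$. Assuming the analogue of Proposition~\ref{prop:blow-up-struc} in the $m=n-1$ case (which, as noted in Remark~\ref{rem:n-1}, follows from \eqref{eq:assump} exactly as in \cite{FigSer19}), $q$ is a nontrivial $\la_*$-homogeneous solution of \eqref{eq:Alm-blowup-nosign} with $\la_*=\la_{*,0}\in [2+\al_0,\,5/2)$; in particular $\la_*$ is non-integer. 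Lemma~\ref{lem:Alm-blowup-accumu-(n-1)} then places $y_\infty\in L$ and furnishes the $L^2$-decay of $q$ at $y_\infty$ at rate $\ge 2+\al_0$, so that the Almgren frequency $\mu$ of $q$ at $y_\infty$ satisfies $\mu\ge 2+\al_0$. On the other hand, $q\Delta q=0$ by \eqref{eq:Alm-blowup-nosign}, so $r\mapsto \Phi(r,q,y_\infty)$ is monotone; since the global $\la_*$-homogeneity of $q$ forces $\Phi(r,q,y_\infty)\to \la_*$ as $r\to\infty$, one gets $\mu\le \la_*<5/2$, whence $\mu\in[2+\al_0,\,5/2)$.

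Next I would perform a second blowup of $q$ at $y_\infty$: set $\tilde q_\rho(x):=q(y_\infty+\rho x)/\|q(y_\infty+\rho\,\cdot\,)\|_{L^2(\partial B_1)}$ and extract a subsequential limit $q_*$, a nontrivial $\mu$-homogeneous solution of \eqref{eq:Alm-blowup-nosign}. The crucial observation is that $q_*$ is invariant along $e:=y_\infty/|y_\infty|$: writing $y_\infty+\rho(x+te)=s\,y_\infty+\rho x$ with $s:=1+\rho t/|y_\infty|$, the global $\la_*$-homogeneity of $q$ yields
\[
q\bigl(y_\infty+\rho(x+te)\bigr)=s^{\la_*}\,q\bigl(y_\infty+(\rho/s)x\bigr),
\]
so dividing by $\|q(y_\infty+\rho\,\cdot\,)\|_{L^2(\partial B_1)}$ and sending $\rho\to 0$ (hence $s\to 1$) delivers $q_*(x+te)=q_*(x)$. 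Since $e\in L$ and $\dim L=n-1=2$, quotienting $q_*$ by the $e$-direction produces a nontrivial $\mu$-homogeneous $\bar q_*$ on the plane $e^\perp\cong\R^2$ satisfying the two-dimensional analogue of \eqref{eq:Alm-blowup-nosign} with thin set the line $\bar L:=L\cap e^\perp$.

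To close the argument, I would split $\bar q_*$ into its even and odd parts with respect to $\bar L$: the odd part is harmonic on $\R^2$ and hence has integer homogeneity, while the even part falls under Lemma~\ref{lem:2-dim-sol-hom} and so has half-integer homogeneity; in either case $\mu\in\{k/2:k\in\N\}$. Combined with $\mu\in[2+\al_0,\,5/2)$ and $\al_0>0$ this forces $k\in (4+2\al_0,\,5)\cap\N=\emptyset$, the desired contradiction, so $\Sigma_{n-1,a}$ must be discrete.

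The delicate step is the translation invariance $q_*(\,\cdot\,+te)=q_*(\,\cdot\,)$: this Federer-type rigidity relies essentially on the global $\la_*$-homogeneity of $q$ and is what collapses the three-dimensional problem at $y_\infty$ down to a two-dimensional one, where the gap $(2,\,5/2)$ in the admissible frequencies can be exploited. The remaining monotonicity-formula bookkeeping is standard and tracks \cite[Lemma~3.4]{FigSer19} essentially verbatim.
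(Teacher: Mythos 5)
Your proposal is correct and follows essentially the same route as the paper's own proof: contradiction, $r_k=2|x_k|$, first blowup to a $\la_*$-homogeneous $q$ with $\la_*\in[2+\al_0,5/2)$, Lemma~\ref{lem:Alm-blowup-accumu-(n-1)} to place $y_\infty\in L$ and lower-bound the frequency there, a second Almgren blowup at $y_\infty$ which inherits translation invariance along $y_\infty$, reduction to two variables since $n=3$, and then the $\{k/2\}$ classification from Lemma~\ref{lem:2-dim-sol-hom} (together with the gap $(2,5/2)$) to reach the contradiction via Almgren monotonicity. The paper states the translation symmetry and the two-variable reduction more tersely and applies Lemma~\ref{lem:2-dim-sol-hom} directly, whereas you spell out the homogeneity computation yielding $q_*(x+te)=q_*(x)$ and explicitly split into even/odd parts (with the odd part globally harmonic) — these are the same steps, just written out in more detail rather than cited; your write-up is arguably slightly more self-contained on those two points.
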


\begin{proof}
We follow the idea in the proof of \cite[Lemma~3.7]{FigSer19}.

Towards a contradiction we assume there is a sequence of points $x_k\in \Sigma_{n-1}$ such that $x_k\to 0\in \Sigma_{n-1,a}$. We define $r_k:=2|x_k|\to0$. Then, over a subsequence $\tilde w_{r_k}\to q$ and $\frac{x_k}{r_k}\to z\in \partial B_{1/2}$, where $q$ is a $\la_*$-homogeneous solution of \eqref{eq:Alm-blowup-nosign}. Note that $q$ satisfies Almgren's frequency formula and Monneau Monotonicity formula since it satisfies $q\Delta q=0$.
From $0\in \Sigma_{n-1,a}$, we have $2+\al_0\le \la_*<5/2$, thus $\la_*$ is not an integer. Then Lemma~\ref{lem:Alm-blowup-accumu-(n-1)} gives $z\in L$ and
$$
\frac1{\rho^{2(2+\al_0)}}\dashint_{\partial B_\rho(z)}q^2\le C,\quad 0<\rho<1/2
$$
for some constant $C>0$ independent of $\rho$.
This implies that $q(z)=|\D q(z)|=|D^2q(z)|=0$ and that $\la^z:=\Phi(0+,q(z+\cdot))\ge 2+\al_0$. Let $q^z$ be the Almgren blowup of $q$ at $z$, i.e., $q^z(x)=\lim_{r_j\to0}\frac{q(z+r_jx)}{\|q(z+r_j\cdot)\|_{L^2(\partial B_1)}}$. Then it is a $\la^z$-homogeneous solution of \eqref{eq:Alm-blowup-nosign} and has translation symmetry in the direction $z$. Since $n=3$, $q^z$ depends only on two variables. We then have by Lemma~\ref{lem:2-dim-sol-hom} that
$$
\la^z\in\{k/2\,:\, k=1,2,3,4,\cdots\}.
$$
This, along with $\la^z\ge 2+\al_0$, gives $\la^z\ge 5/2$. We now apply Almgren's frequency formula and the fact that $q$ is homogeneous (with respect to the origin) to get a contradiction
\begin{equation*}
    5/2\le\la^z=\Phi(0+,q(z+\cdot))\le \Phi(\infty,q(z+\cdot))=\Phi(\infty,q)=\Phi(0+,q)=\la_*<5/2.\qedhere
\end{equation*}
\end{proof}

When $n\ge4$, we can prove the following lemma by applying Theorem~\ref{thm:(n-1)-hom-Alm-freq} and repeating the argument in \cite[Lemma~3.9]{FigSer19}.

\begin{lemma} \label{lem:anom-(n-1)-dim}If $n\ge4$ and \eqref{eq:assump} holds, then $\dim_{\cH}(\Sigma_{n-1,a})\le n-3$.
\end{lemma}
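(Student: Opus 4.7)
The plan is to follow the dimension reduction argument of \cite[Lemma~3.9]{FigSer19}, with the classification of 2D frequencies from Lemma~\ref{lem:2-dim-sol-hom} in place of the thin obstacle classification, and Theorem~\ref{thm:(n-1)-hom-Alm-freq} replacing the Focardi--Spadaro frequency bound. I argue by contradiction and assume $\dim_{\cH}(\Sigma_{n-1,a})>n-3$. Fix $s\in(n-3,\dim_{\cH}(\Sigma_{n-1,a}))$ and, by a standard covering/density argument, pick a point (which we take to be the origin after translation) at which $\Sigma_{n-1,a}$ has positive upper $\mathcal{H}^s$-density. Since $\Sigma_{n-1,a}\subset\Sigma_{n-1}$, the blow-up polynomial $p_{*}$ at $0$ has coincidence set $L$ of dimension $n-1$, and $0\in\Sigma_{n-1,a}$ forces $\la_{*}\in[2+\al_0,5/2)$; in particular $\la_*\notin\mathbb{N}$.

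Next I extract a blow-up. Choose $x_k\in\Sigma_{n-1,a}$ with $x_k\to0$, set $r_k:=2|x_k|$ and $y_k:=x_k/r_k\in\overline{B_{1/2}}$. Passing to subsequences, $\tilde w_{r_k}\rightharpoonup q$ weakly in $W^{1,2}(B_1)$, where by Proposition~\ref{prop:blow-up-struc} $q$ is a $\la_*$-homogeneous solution of \eqref{eq:Alm-blowup-nosign}. Using the positive $\mathcal{H}^s$-density of $\Sigma_{n-1,a}$ at $0$ together with a Federer-type dimension-reduction selection (as in \cite[Lemma~3.9]{FigSer19}), one can arrange that the set $E\subset\overline{B_{1/2}}$ of accumulation points of $\{y_k\}$ satisfies $\dim_{\cH}(E)\ge s>n-3$. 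Because $\la_*$ is not an integer, the second conclusion of Lemma~\ref{lem:Alm-blowup-accumu-(n-1)} applies at every $y_\infty\in E$: we obtain $E\subset L$, $q(y_\infty)=0$, and the Monneau-type bound
\[
\frac1{\rho^{2\la}}\dashint_{\partial B_\rho}q(y_\infty+x)^2\,dx\le 2^{2\la}\dashint_{\partial B_{1/2}}q(y_\infty+x)^2\,dx,\qquad 0<\rho<1/2,
\]
with $\la=\inf_k\la_{*,x_k}\ge 2+\al_0$. Consequently the Almgren frequency $\la^{y_\infty}:=\Phi(0+,q(y_\infty+\cdot))$ is at least $2+\al_0>2$ for every $y_\infty\in E$.

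Now I apply the frequency classification: Theorem~\ref{thm:(n-1)-hom-Alm-freq} says that the set of $z\in\{q=0\}\cap L$ for which $\la^{z}\notin\{k/2:k\ge2\}$ has Hausdorff dimension $\le n-3$. Since $\dim_{\cH}(E)>n-3$, some $y_\infty\in E$ must have $\la^{y_\infty}\in\{k/2:k\ge2\}$; combined with $\la^{y_\infty}>2$ this yields $\la^{y_\infty}\ge 5/2$. Finally, the $\la_*$-homogeneity of $q$ and Almgren monotonicity applied to $q(y_\infty+\cdot)$ give
\[
5/2\le \la^{y_\infty}=\Phi(0+,q(y_\infty+\cdot))\le\Phi(\infty,q(y_\infty+\cdot))=\Phi(\infty,q)=\la_*<5/2,
\]
a contradiction. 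Hence $\dim_{\cH}(\Sigma_{n-1,a})\le n-3$.

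The step I expect to be most delicate is the Federer-type selection producing accumulation points of dimension $\ge s$ for the \emph{normalized} sequence $y_k$, since our blow-up $q$ does not come from a thin obstacle problem with a one-sided sign condition on $L$. One needs to verify that Lemma~\ref{lem:Alm-blowup-accumu-(n-1)} (based on the Almgren/Monneau monotonicity of $u(x_k+\cdot)-p_{*,x_k}$, which is available here thanks to $p_{*,x_k}\ge0$) is strong enough to propagate the Hausdorff dimension from $\Sigma_{n-1,a}$ to $E\subset L$; beyond that, every remaining step is a direct transcription of the Figalli--Serra argument with the enlarged frequency set $\{k/2:k\ge2\}$ in place of the thin-obstacle set $\{k:k\ge1\}\cup\{k+1/2:k\ge1\}$.
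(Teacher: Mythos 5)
Your overall strategy is the paper's strategy: argue by contradiction, blow up at a density point of $\Sigma_{n-1,a}$, show the relevant accumulation set sits in $\{q=0\}\cap L$ with frequency $\ge 2+\al_0$ at each point, and then use Theorem~\ref{thm:(n-1)-hom-Alm-freq} together with Almgren monotonicity of the homogeneous profile $q$ to produce a contradiction. The final contradiction $5/2\le \la^{y_\infty}\le \Phi(\infty,q)=\la_*<5/2$ is exactly what the paper derives, just phrased by isolating a single point of $E$ in $\{k/2:k\ge 2\}$ rather than observing that all of $S$ lies in the exceptional set of Theorem~\ref{thm:(n-1)-hom-Alm-freq}. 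These are logically equivalent.

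There is, however, a genuine gap in the way you set up the accumulation set, and it is precisely the step you flag as delicate. You take $r_k:=2|x_k|$ and define $E$ to be the set of accumulation points of the single normalized sequence $\{y_k\}=\{x_k/r_k\}$. With that normalization every $y_k$ lies on $\partial B_{1/2}$, and there is no mechanism by which the positive upper $\cH^s$-density of $\Sigma_{n-1,a}$ at $0$ controls the Hausdorff dimension of the limit set of one fixed sequence of points; one would have to construct the sequence very carefully, and the $\cH^s$-density on its own does not do this. Moreover tying $r_k$ to $x_k$ via $r_k=2|x_k|$ is the device used in the $n=3$ discreteness argument (the paper's Lemma~\ref{lem:anom-discrete-(n=3)}), where a single accumulation point suffices; it is not the right choice here. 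The paper instead fixes a density sequence $r_k\searrow 0$ (so that $\cH^\be_\infty(\Sigma_{n-1,a}\cap B_{r_k})\ge c\,r_k^\be$ for some $\be>n-3$) \emph{independently of the points}, passes to a subsequence along which $\tilde w_{r_k}\rightharpoonup q$, and then considers the accumulation set
$$
\cA=\bigl\{z\in\overline{B_{1/2}}:\ \exists\, k_l\to\infty,\ \exists\, z_l\in r_{k_l}^{-1}\Sigma_{n-1,a}\cap B_{1/2}\ \text{with}\ z_l\to z\bigr\},
$$
which satisfies $\cH^\be_\infty(\cA)>0$ by the upper semicontinuity argument of \cite[Lemma~3.5]{FigSer19}. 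Each $z\in\cA$ is reached by $z_l=x_l/r_{k_l}$ with $x_l\in\Sigma_{n-1,a}$, $|x_l|\le r_{k_l}/2$, and $\tilde w_{r_{k_l}}\rightharpoonup q$, so Lemma~\ref{lem:Alm-blowup-accumu-(n-1)} applies pointwise on $\cA$; this gives $\cA\subset\{q=0\}\cap L$ and $\Phi(0+,q(z+\cdot))\ge 2+\al_0$ for every $z\in\cA$. From there your closing argument via Theorem~\ref{thm:(n-1)-hom-Alm-freq} goes through verbatim. So: replace the single-sequence accumulation set $E$ (with $r_k=2|x_k|$) by the set-valued accumulation set $\cA$ along a pre-chosen density sequence; then the proof is complete and coincides with the paper's.

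Two smaller points. First, you invoke Proposition~\ref{prop:blow-up-struc} for the structure of the blow-up profile $q$, but that proposition is stated only for $m\le n-2$; what you need for $m=n-1$ is the analogue discussed in Remark~\ref{rem:n-1}, namely that the Almgren blow-up is a $\la_*$-homogeneous solution of \eqref{eq:Alm-blowup-nosign}. Second, the hypothesis $\la_*\notin\mathbb{N}$ does hold (since $2+\al_0\le\la_*<5/2$), but even if it were an integer the paper's version of the argument would survive by working with the even part $q^{\text{even}}$, as Lemma~\ref{lem:Alm-blowup-accumu-(n-1)} covers that case too; relying on the non-integrality is fine here but is not essential.
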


\begin{proof}
Suppose that the lemma is not true. Then $\cH^\be_\infty(\Sigma_{n-1,a})>0$ for some $\be>n-3$. Without loss of generality, we may assume $0\in \Sigma_{n-1,a}$ is a density point, i.e., for some sequence $r_K\searrow0$,
$$
\lim_{k\to\infty}\frac{\cH_\infty^\be(\Sigma_{n-1,a}\cap B_{r_k})}{r_k^\be}\ge c_{n,\be}>0.
$$
We then consider the \qq{accumulation set} for $A_{n,a}$ along $r_k$ at $0$
\begin{multline*}
    \cA=\cA_{\Sigma_{n-1,a},\{r_k\}}:=\{z\in\overline{B_{1/2}}\,:\, \text{there exist a sequence of points $\{z_l\}_{l\ge1}$}
    \\ \text{and a sequence of numbers $\{k_l\}_{l\ge1}$ such that $z_l\in r_{k_l}^{-1}\Sigma_{n-1,a}\cap B_{1/2}$ and $z_l\to z$}\}.
\end{multline*}
Then $\cH^\be_\infty(\cA)>0$ (see e.g. Lemma~3.5 in \cite{FigSer19}). Over a subsequence $w_{r_k}\to q$ weakly in $B_1$, where $q$ is a $\la_*$-homogeneous solution of \eqref{eq:Alm-blowup-nosign}. From $0\in \Sigma_{n-1,a}$ and \eqref{eq:assump}, we have $2+\al_0\le \la_*<5/2$. Let 
$$
S:=\{z\in\overline{B_1}\cap \{q=0\}\cap L\,:\, \Phi(0+,q(z+\cdot))\ge 2+\al_0\}.
$$
Then the definition of $\cA$ implies $\cA\subset S$ (see the proof of Lemma~\ref{lem:anom-discrete-(n=3)}). Thus,
$$
0<\cH_\infty^\be(\cA)\le \cH^\be_\infty(S).
$$
We claim that $\cH^\be_\infty(S)=0$, which combined with the inequality above implies contradiction and completes the proof. Indeed, if $z\in S$, then
$$
2+\al_0\le \Phi(0+,q(z+\cdot))\le \Phi(\infty, q(z+\cdot))=\Phi(\infty,q)=\la_*<5/2,
$$
in particular, $2+\al_0\le \Phi(0+,q(z+\cdot))<5/2$. By applying Theorem~\ref{thm:(n-1)-hom-Alm-freq}, we get $\dim_{\cH}(S)\le n-3$. This yields $\cH^\be_\infty(S)=0$, since $\be>n-3$.
\end{proof}

We now provide formal proof of Conjecture~\ref{conj:n-1} (c).
\begin{proof}
    [Proof of Conjecture~\ref{conj:n-1} (c)] Conjecture~\ref{conj:n-1} (c) follows from Lemmas~\ref{lem:anom-discrete-(n=3)} and ~\ref{lem:anom-(n-1)-dim}.
\end{proof}

\appendix


\section{Recursive Inequality}\label{appen:recur-ineq}
In this section we prove the following lemma concerning a recurrence inequality appearing in the proof of Theorem~\ref{thm:main} $(c)$.

\begin{lemma}\label{lem:M_k-est}
Given $k_0\in \mathbb{N}$ and positive constants $M$, $C_1$, $C_2$, let $\{M_k\}_{k=k_0}^\infty$ be a sequence of nonnegative numbers such that
$$
M_{k+1}\le M_k-C_1M_k^{2n-1}+C_2M_k^{2n-2}(1/k)^\be\,\,\,\text{ and }\,\,\, M_k\le M
\,\,\,\text{ for every } k\ge k_0, 
$$
where $\be=\frac1{2(n-1)}$. Then there exist constants $C_0>0$ and $K_0\in\mathbb{N}$, depending only on $n$, $k_0$, $M$, $C_1$, $C_2$, such that
\begin{align}\label{eq:M_k-induc}
M_k\le C_0(1/k)^\be,\quad k\ge K_0.
\end{align}
\end{lemma}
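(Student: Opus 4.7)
Plan: The key idea is to rescale by setting $\Phi_k := k^\beta M_k$, which converts the goal $M_k \le C_0 k^{-\beta}$ into the simpler bounded form $\Phi_k \le C_0$. The hypothesis $\beta = 1/(2(n-1))$ gives the identity $(2n-2)\beta = 1$, which makes the decay term $C_1 M_k^{2n-1}$ and the noise $C_2 M_k^{2n-2}k^{-\beta}$ both of order $k^{-1}$ after rescaling, so the two terms genuinely compete at the critical scale $M_k\sim k^{-\beta}$. Assuming $M_k>0$ (if $M$ vanishes at some step, the conclusion is trivial from that step onward), dividing the recurrence by $M_k$ and multiplying by $((k+1)/k)^\beta$ produces
\[
\Phi_{k+1} \;\le\; \Phi_k(1+1/k)^\beta\bigl(1 - k^{-1}F(\Phi_k)\bigr),\qquad F(x):=x^{2n-3}(C_1 x - C_2).
\]

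I would then proceed by a two-regime dichotomy. Since $F$ is continuous with $F(0)=0$ and $F(x)\to\infty$, one can fix a constant $B>0$, depending only on $n,C_1,C_2$, with $F(B)=\beta+1$; set $F_\infty := \max_{[0,B]}|F|$. Bernoulli's inequality $(1+1/k)^\beta\le 1+\beta/k$ (valid since $\beta\in(0,1]$) then yields the following: in the \emph{large regime} $\Phi_k \ge B$, one has $F(\Phi_k)\ge\beta+1$, so the factor is at most $1 + (\beta-F(\Phi_k))/k \le 1-1/k$, giving $\Phi_{k+1}\le\Phi_k$; in the \emph{small regime} $\Phi_k < B$, one has $|F(\Phi_k)|\le F_\infty$, so the factor is at most $(1+\beta/k)(1+F_\infty/k)\le 1 + A/k$ with $A:=\beta+F_\infty+\beta F_\infty$, giving $\Phi_{k+1}\le B + D/k$ for $D:=BA$.

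To close the argument, I would set $K_0:=\max\{k_0,2D\}$ and $C_0:=MK_0^\beta + B + 1$. The base case $\Phi_{K_0} \le M K_0^\beta \le C_0$ is immediate. For the inductive step, if $\Phi_k \le C_0$ for some $k\ge K_0$, the dichotomy applies: either $\Phi_k \ge B$ (large regime, so $\Phi_{k+1}\le\Phi_k\le C_0$), or $\Phi_k < B$ (small regime, so $\Phi_{k+1}\le B + D/K_0 \le B + \tfrac12 \le C_0$). Hence $\Phi_k \le C_0$ for all $k\ge K_0$, which is precisely the desired bound $M_k\le C_0 k^{-\beta}$.

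The main obstacle is conceptual: identifying the scale-invariant substitution $\Phi_k = k^\beta M_k$ and recognizing that the single polynomial $F$ captures both the ``useful'' cubic-type drift and the ``harmful'' noise. Without this observation it is not transparent how to balance the two contributions, since a naive induction directly on $M_k\le C_0 k^{-\beta}$ forces a compatibility between $K_0$ and $C_0$ that breaks when $M$ is large. Once the framework above is in place, only elementary tools (Bernoulli's inequality and a compact sup-bound on $F$) are needed.
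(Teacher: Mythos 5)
Your proof is correct. The derivation of the rescaled recurrence
\[
\Phi_{k+1}\le \Phi_k\,(1+1/k)^\beta\bigl(1-k^{-1}F(\Phi_k)\bigr),\qquad F(x)=x^{2n-3}(C_1x-C_2),
\]
is exactly right, hinging on the identity $(2n-2)\beta=1$; the choice of $B$ with $F(B)=\beta+1$ is legitimate because $F$ is continuous, vanishes at $0$ and at $C_2/C_1$, is increasing on $[C_2/C_1,\infty)$, and tends to $+\infty$; and the two-regime induction closes as you describe. (The edge cases are handled: when $M_k=0$ the recurrence forces $M_j=0$ for all $j\ge k$, and when the bracket $1-k^{-1}F(\Phi_k)$ is negative the nonnegativity of $\Phi_{k+1}$ makes the conclusion immediate.)

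The route is genuinely different from the paper's. The paper's proof works directly on $M_k$: it posits $C_0$ subject to four interlocking algebraic conditions, defines $K_0$ in terms of $C_0$, and then runs an induction whose dichotomy is $M_k\lessgtr\tfrac{C_0}{2}(k+1)^{-\beta}$ --- a threshold that itself depends on $C_0$. Your substitution $\Phi_k=k^\beta M_k$ reveals the scale invariance of the critical exponent and collapses the two competing terms into the single polynomial $F$, so that the dichotomy threshold $B$ is a universal constant (depending only on $n,C_1,C_2$), with $C_0$ chosen cleanly \emph{afterwards}. This decoupling makes the choice of constants transparent and the induction nearly automatic, whereas the paper's version requires a cascade of explicit inequalities to be checked. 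The estimates involved are ultimately equivalent in strength, but your framing is the cleaner one.

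One cosmetic remark: since $K_0$ must be an integer, write $K_0:=\max\{k_0,\lceil 2D\rceil\}$ rather than $\max\{k_0,2D\}$.
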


\begin{proof}
We take a constant $C_0=C_0(n,k_0,M,C_1,C_2)>0$ large so that
\begin{align}
    \label{eq:C_0-condi}
    \begin{split}
    &\max\{k_0,1\}\le 2C_2\left(\frac{C_0}2\right)^{2n-3},\quad    M\le \left(\frac{2^{2n-5}}{C_2}\right)^{\frac1{2n-2}}C_0^{\frac1{2n-2}},\\
    &2\le \frac{(2n-2)C_1C_0^{2n-2}}{2^{2n}},\quad 2^{\be(2n-1)}\le \frac{C_1C_0}{2^{2n}C_2}.
\end{split}\end{align}
We then let $K_0\in \mathbb{N}$ be the smallest integer such that $K_0\ge 2C_2\left(\frac{C_0}2\right)^{2n-3}$. Note that $K_0\ge k_0$ by \eqref{eq:C_0-condi}.

For such $C_0$ and $K_0$, we prove Lemma~\ref{lem:M_k-est} by induction on $k\ge K_0$. To this aim, we first prove \eqref{eq:M_k-induc} for $k=K_0$. As $2C_2\left(\frac{C_0}2\right)^{2n-3}>1$ by \eqref{eq:C_0-condi}, the construction of $K_0$ implies $K_0\le 4C_2\left(\frac{C_0}2\right)^{2n-3}$. This, along with \eqref{eq:C_0-condi}, gives
\begin{align*}
    C_0(1/K_0)^\be\ge C_0\left(\frac{2^{2n-5}}{C_2C_0^{2n-3}}\right)^{\frac1{2n-2}}=\left(\frac{2^{2n-5}}{C_2}\right)^{\frac1{2n-2}}C_0^{\frac1{2n-2}}\ge M\ge M_{K_0}.
\end{align*}
We now assume \eqref{eq:M_k-induc} is true for $k\ge K_0$ and prove it for $k+1$. We split its proof into the two cases 
$$
\text{either \,\,$M_k\le \frac{C_0}2\left(\frac1{k+1}\right)^{\be}$\,\, or\,\, $M_k> \frac{C_0}2\left(\frac1{k+1}\right)^{\be}$.}
$$
\emph{Case 1.} We first deal with the case $M_k\le \frac{C_0}2\left(\frac1{k+1}\right)^{\be}$. From
$$
M_{k+1}\le M_k+C_2M_k^{2n-2}(1/k)^\be\le \frac{C_0}2\left(\frac1{k+1}\right)^\be+C_2\left[\frac{C_0}2\left(\frac1{k+1}\right)^\be\right]^{2n-2}(1/k)^\be,
$$
we see that $M_{k+1}\le C_0\left(\frac1{k+1}\right)^{\be}$ follows once we show $C_2\left[\frac{C_0}2\left(\frac1{k+1}\right)^\be\right]^{2n-2}(1/k)^\be\le \frac{C_0}2\left(\frac1{k+1}\right)^\be$. A direct computation show that the latter inequality is equivalent to $k+1\ge C_2\left(\frac{C_0}2\right)^{2n-3}\left(\frac{k+1}k\right)^\be$. This obviously holds true for $k\ge K_0\ge 2C_2\left(\frac{C_0}2\right)^{2n-3}$.

\medskip\noindent\emph{Case 2.} Now we consider the case $M_k> \frac{C_0}2\left(\frac1{k+1}\right)^{\be}$. We have
\begin{align*}
    M_{k+1}&\le M_k-C_1M_{k}^{2n-1}+C_2M_k^{2n-2}(1/k)^\be\\
    &\le C_0(1/k)^\be-C_1\left[\frac{C_0}2\left(\frac1{k+1}\right)^\be\right]^{2n-1}+C_2\left[C_0\left(1/k\right)^\be\right]^{2n-2}(1/k)^\be.
\end{align*}
Thus, to obtain $M_{k+1}\le C_0\left(\frac1{k+1}\right)^\be$, it is enough to show that
$$
C_0(1/k)^\be+C_2\left[C_0\left(1/k\right)^\be\right]^{2n-2}(1/k)^\be\le C_1\left[\frac{C_0}2\left(\frac1{k+1}\right)^\be\right]^{2n-1}+C_0\left(\frac1{k+1}\right)^\be.
$$
This inequality follows once we prove
\begin{align*}
& (i)\,\,\,C_0(1/k)^\be\le \frac{C_1}2\left[\frac{C_0}2\left(\frac1{k+1}\right)^\be\right]^{2n-1}+C_0\left(\frac1{k+1}\right)^\be,\\
&(ii)\,\,\, C_2\left[C_0\left(1/k\right)^\be\right]^{2n-2}(1/k)^\be\le \frac{C_1}2\left[\frac{C_0}2\left(\frac1{k+1}\right)^\be\right]^{2n-1}.
\end{align*}
We observe that $(i)$ is equivalent to
$$
(1/k)^\be\le \frac{C_1C_0^{2n-2}}{2^{2n}}\left(\frac1{k+1}\right)^{\be(2n-1)}+\left(\frac1{k+1}\right)^\be.
$$
We multiply $(k+1)^\be$ on both sides and use $\be=\frac1{2n-2}$ to get 
$$
\left(\frac{k+1}{k}\right)^\be\le \frac{C_1C_0^{2n-2}}{2^{2n}}\left(\frac1{k+1}\right)+1,
$$
which is the same as 
$$
1+\frac1k\le \left(1+\frac{C_1C_0^{2n-2}}{2^{2n}}\left(\frac1{k+1}\right)\right)^{2n-2}.
$$
Multiplying out the right-hand side gives
$$
1+\frac1k\le 1+\frac{(2n-2)C_1C_0^{2n-2}}{2^{2n}}\left(\frac1{k+1}\right)+\cdots,
$$
which is true by \eqref{eq:C_0-condi} and the fact $\frac{k+1}k\le 2$.

It remains to consider $(ii)$. It is easy to see by a direction computation that $(ii)$ is equivalent to 
$$
\left(\frac{k+1}k\right)^{\be(2n-1)}\le \frac{C_1C_0}{2^{2n}C_2},
$$
which holds true again by \eqref{eq:C_0-condi} and the inequality $\frac{k+1}k\le 2$. This completes the proof.  
\end{proof}


\section*{Declarations}

\noindent {\bf  Data availability statement:} All data needed are contained in the manuscript.

\medskip
\noindent {\bf  Funding and/or Conflicts of interests/Competing interests:} The authors declare that there are no financial, competing or conflict of interests.


\end{document}